\newcommand{\EQ}{\begin{equation}}
\newcommand{\EN}{\end{equation}}
\newtheorem{theorem}{Theorem}
\newtheorem{corollary}{Corollary}
\newtheorem{proposition}{Proposition}
\newtheorem{lemma}{Lemma}
\newtheorem{ex}{Example}
\newtheorem{remark}{Remark}
\newcommand{\F}{{\mathbb F}_{q^n}}
\newcommand{\FB}{{\mathbb F}_q}
\newcommand{\al}{{\alpha}}
\newenvironment{proof}{\begin{trivlist}\item[]{\em Proof. }}%
{\samepage\hfill$\diamond$\end{trivlist}}
\title{Constructing permutations of finite fields via linear translators}
\author{ Gohar M. Kyureghyan}
\date{Department of Mathematics,
Otto-von-Guericke-University Magdeburg,\\
D-39016 Magdeburg, Germany}
\begin{document}
\maketitle
\begin{abstract}
We study the permutations of the finite field $\F$ given by 
$x + \gamma\, f(x)$, where $\gamma \in \F$ is a linear translator of  $f:\F \to \FB$. 
We determine the cycle structure and the inverse of such a permutation.
We describe several families of permutation polynomials obtained using functions with linear translators.
\end{abstract}

\noindent
{\bf Keywords:} Permutation polynomial,  cycle structure, complete mapping, linear translator, linear structure.

\section{Introduction}

Let $q$ be a power of a prime number and $\F$ be the finite field  of order $q^n$. 
Any polynomial $F(X) \in \F[X]$ defines a mapping
$
F :\F \to  \F \mbox{ via } x \mapsto  F(x),
$
which is called the associated mapping of $F(X)$.  Furthermore, any mapping from
 a finite field into itself is given by a polynomial.
A polynomial
$F(X)$ is called a {\em permutation polynomial}
of $\F$ if its associated mapping  is a permutation. Permutation polynomials over finite fields have  a  variety  of  theoretical and practical applications. 
Permutations described by  ``nice'', for instance  sparse, polynomials are of special interest.

A permutation $F$, for which the mapping $G(x) = F(x) + x$ is a permutation as well, is called a {\em complete mapping}, while
the mapping $G(x)$  in its turn is called an {\em orthomorphism}. Orthomorphisms yield  latin squares which are orthogonal
to the Caley table of the additive group of $\F$.
More generally, let $H : \F \to \F$ and define
$$
\mathcal{M}(H) := \{  c \in \F ~|~ H(x) + cx \mbox{ is a permutation of } \F \}.
$$
Clearly, $F$ is a  complete mapping if and only if $\{0,1 \} \subseteq \mathcal{M}(F)$, and 
 $G$ is an  orthomorphism if and only if $\{0,-1 \} \subseteq \mathcal{M}(G)$.
The set $D_H$ of directions determined by the mapping $H$ is defined as follows
$$
D_H := \left\{ \frac{H(x) - H(y)}{x-y} ~|~ x,y \in \F, x\ne y \right\}.
$$
Note that $c \in \mathcal{M}(H)$ if and only if $-c \notin D_H$, and hence
a mapping $H$ with a large set $\mathcal{M}(H)$ determines a small number of directions, or equivalently a small blocking set of R\'edei type in $PG(2, q^n)$.
Sharp bounds on the size of $\mathcal{M}(H)$ are proved in \cite{ball, evans-greene-niederreiter, redei}. Further references on this
topic may be found in \cite{ball}.

Let $\gamma \in \F$ and  $G:\F \to \F, ~f:\F \to \FB$ and define
\begin{equation}\label{pp-form}
F(x) = G(x) + \gamma \,f(x).
\end{equation}  
The description of the mappings $F$ and $G$ with respect to a  basis of $\F$ over $\FB$ containing the element  $\gamma \ne 0$ yields
a geometrical relation between them.
Indeed, let $(\gamma, \beta_1, \ldots , \beta_{n-1})$ be a basis of  $\F$ over $\FB$. Denote by 
$g_i:\F \to \FB, ~0\leq i \leq n-1,$ the coordinate functions of $G$ with respect to the chosen basis. Then for any $x \in \F$ it holds
$$
G(x) = g_0(x)\gamma + g_1(x)\beta_1 + \ldots +  g_{n-1}(x)\beta_{n-1}
$$
and
$$
F(x) = (g_0+f)(x)\gamma + g_1(x)\beta_1 + \ldots +  g(x)\beta_{n-1}.
$$
Hence $F$ is obtained from $G$ by changing  its $\gamma$-coordinate function. In \cite{edel-pott} APN mappings and 
in  \cite{charpin-kyureg-bpp, charpin-kyureg-gpp, marcos} permutations of
form (\ref{pp-form}) are studied.

This paper continues the study of permutations of form (\ref{pp-form}).
In Section \ref{sec-inv} we briefly introduce a concept of a linear translator.
Further we characterize permutations $x + \gamma\, f(x)$ of $\F$, where
$\gamma \in \F$ is a linear translator of  $f:\F \to \FB$.
 In particular, we
determine explicitly the inverse and the cycle structure of such a permutation. 
In Section \ref{sec-pp} we describe explicit constructions of permutation polynomials
based on the results of Section \ref{sec-inv}.
Moreover, we show that the knowledge of the inverse mapping of a permutation $x\mapsto x + \gamma\, f(x)$
allows to construct permutations by changing two coordinate functions of the identity mapping.
Finally, in Section \ref{sec-gen} we give further constructions  of permutation polynomials of the shape
$L(X) + \gamma \,f(X)$,  where $L(X)$ is a linearized polynomial and
 $\gamma \in \F$ and  the polynomial $f(X) \in \F[X]$ induces a mapping into $\FB$.
In the constructions of Section \ref{sec-gen} the element $\gamma$ is not necessarily a linear translator of $f$.

\section{Remarks on permutations $x + \gamma\, f(x)$} \label{sec-inv}

Let $f$ be a  mapping from a finite field $\F$ into its 
 subfield $\FB$.
Such a mapping can be represented by $Tr(G(x))$ for some
 (not unique) mapping 
$G:\F \to \F$, where $Tr$ is the trace mapping from $\F$ onto $\FB$ given by
\[
Tr(y)=y + y^{q} +  y^{q^2}+  \dots + y^{q^{n-1}}.
\]
Indeed, we need just to choose the value of $G(x)$ to satisfy $Tr(G(x)) = f(x)$.

A non-zero element $\al \in \F$ is called
an $a$-linear translator (or linear structure, cf.~\cite{charpin-kyureg-bpp, charpin-kyureg-gpp}) for the mapping $f: \F \to \FB$ if
\EQ\label{eq:ls}
 f(x+ u\al) - f(x) = ua, 
\EN
for all  $x\in\F, u \in \FB$ and some fixed $ a \in \FB$.
Note that if (\ref{eq:ls}) is satisfied then $a$ is uniquely determined, more exactly, it holds $a = f(\alpha) - f(0)$.
 The next two results are proved in \cite{lai}, see also \cite{charpin-kyureg-gpp}. 
\begin{proposition}\label{linsp}
Let $\al,\beta \in \F^*,~\al + \beta \ne 0$ and $a,b \in \FB$. If
$\al$  is an $a$-linear translator and $\beta$ is a $b$-linear translator of a
 mapping $f: \F \to \FB$ , then
$$
\al + \beta \textrm{ is an } (a+b)-\textrm{linear translator of } f
$$
and for any $ c \in \FB^*$
$$
c\cdot \al  \textrm{ is a } (c\cdot a)-\textrm{linear translator of $f$}.
$$
In particular, if $\Lambda^*(f)$ denotes the set of all linear translators of 
$f$, then $\Lambda(f)=\Lambda^*(f)\cup\{0\}$ is a $\FB$-linear subspace of $\F$.
\end{proposition}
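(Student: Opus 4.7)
The proof is a direct unfolding of the defining equation (\ref{eq:ls}), with no real obstacle beyond careful bookkeeping. Let me outline the steps.

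First I would verify the additive claim. Fix $x \in \F$ and $u \in \FB$. The plan is to split the difference $f(x + u(\al+\beta)) - f(x)$ by inserting the intermediate point $x + u\beta$:
$$
f(x + u(\al+\beta)) - f(x) = \bigl(f((x+u\beta) + u\al) - f(x+u\beta)\bigr) + \bigl(f(x+u\beta) - f(x)\bigr).
$$
Applying the $a$-translator property of $\al$ at the point $x+u\beta$ gives $ua$ for the first bracket, and the $b$-translator property of $\beta$ at $x$ gives $ub$ for the second. The sum is $u(a+b)$, matching (\ref{eq:ls}) for $\al+\beta$ with constant $a+b$. The hypothesis $\al+\beta\ne 0$ is used precisely to ensure that $\al+\beta$ is a legitimate candidate (translators are non-zero by definition).

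Next I would verify the scalar claim. For $c \in \FB^*$ and any $u \in \FB$, the product $uc$ again lies in $\FB$, so we may substitute $u\mapsto uc$ in the defining identity for $\al$:
$$
f(x + u(c\al)) - f(x) = f(x + (uc)\al) - f(x) = (uc)\,a = u\,(ca).
$$
Since $c\ne 0$ and $\al\ne 0$ force $c\al \ne 0$, this shows $c\al$ is a $(ca)$-linear translator.

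Finally, the subspace statement follows formally. By definition $0 \in \Lambda(f)$, and the two claims just established say that $\Lambda(f)$ is closed under $\FB$-scalar multiplication and under addition whenever the sum is non-zero; sums that happen to vanish land on $0 \in \Lambda(f)$ by fiat. Hence $\Lambda(f)$ is an $\FB$-linear subspace of $\F$. The only mildly delicate point in the whole argument is handling the edge case $\al+\beta = 0$ in the definition of $\Lambda(f)$, and this is resolved by explicitly adjoining $0$ to the set of translators.
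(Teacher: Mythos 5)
Your proof is correct: the telescoping decomposition through the intermediate point $x+u\beta$, the substitution $u\mapsto uc$ in the defining identity, and the handling of the degenerate cases ($\al+\beta=0$, scalar $c=0$) via the adjoined zero are exactly what is needed. The paper itself gives no proof of this proposition (it cites Lai and Charpin--Kyureghyan), and your direct verification is the standard argument one would find there, so there is nothing further to reconcile.
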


In particular Proposition \ref{linsp} shows that the restriction of the mapping  $f(x)-f(0)$ on 
the subspace $\Lambda(f)$ is a $\FB$-linear mapping. So $\Lambda(f)=\F$ if and only if $f(x)$ is an
affine mapping, or equivalently if $f(x)= Tr(\beta x) +b$ for some $\beta \in \F$ and $b \in \FB$.
Moreover, the following theorem holds:

\begin{theorem}\label{lai}
Let $G:\F \to \F$ and $f(x)=Tr(G(x))$.
Then $f$   has a linear translator if and only if there is
a non-bijective $\FB$-linear mapping $L:\F \to \F$ such that
\EQ\label{eq-lai}
f(x)=Tr(G(x)) = Tr\big(H\circ L(x)+ \beta x \big)
\EN
for some $H:\F\to \F$ and  $\beta \in \F$.
In this case,  the kernel of $L$ is contained in the subspace $ \Lambda(f)$.
\end{theorem}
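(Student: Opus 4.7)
The plan is to prove both implications separately, with the $(\Leftarrow)$ direction also establishing the containment $\ker(L) \subseteq \Lambda(f)$ stated at the end of the theorem.

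For the $(\Leftarrow)$ direction, assume $f(x) = Tr(H \circ L(x) + \beta x)$ for a non-bijective $\FB$-linear map $L : \F \to \F$, so that $\ker(L) \neq \{0\}$. For any $v \in \ker(L)$, $x \in \F$, and $u \in \FB$, I would use $L(x + uv) = L(x) + u L(v) = L(x)$ together with the $\FB$-linearity of $Tr$ to get
\[
f(x + uv) - f(x) = Tr\bigl(H(L(x)) + \beta x + u\beta v\bigr) - Tr\bigl(H(L(x)) + \beta x\bigr) = u \cdot Tr(\beta v).
\]
Hence every nonzero $v \in \ker(L)$ is a $Tr(\beta v)$-linear translator of $f$, which simultaneously gives $\ker(L) \subseteq \Lambda(f)$ and the existence of a linear translator.

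For the $(\Rightarrow)$ direction, set $V := \Lambda(f)$. By Proposition \ref{linsp}, $V$ is a nonzero $\FB$-subspace of $\F$, and the assignment $v \mapsto a_v$ from (\ref{eq:ls}) is $\FB$-linear on $V$. I would extend this linear functional $V \to \FB$ to an $\FB$-linear functional $\F \to \FB$ (by choosing any $\FB$-complement of $V$ and setting the extension to be zero there), and then use trace duality to pick $\beta \in \F$ with $Tr(\beta v) = a_v$ for every $v \in V$. Setting $g(x) := f(x) - Tr(\beta x)$, the defining property (\ref{eq:ls}) of $a_v$ gives $g(x + v) = g(x)$ for all $v \in V$ and $x \in \F$, so $g$ is constant on the cosets of $V$.

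Finally, I choose any $\FB$-linear map $L : \F \to \F$ with $\ker(L) = V$ — for example, the projection onto an $\FB$-complement of $V$ — which is non-bijective because $V \neq \{0\}$. Since $L$ induces a bijection $\F/V \to \mathrm{Im}(L)$ and $g$ is constant on cosets of $V = \ker(L)$, there is a unique $h : \mathrm{Im}(L) \to \FB$ with $g = h \circ L$; I extend $h$ arbitrarily to a function $\F \to \FB$. Using the remark at the beginning of this section that every $\FB$-valued function on $\F$ equals $Tr \circ H$ for some $H : \F \to \F$, I select such an $H$ for $h$. Then $f(x) = Tr(\beta x) + Tr(H(L(x))) = Tr(H \circ L(x) + \beta x)$, as required. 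The main obstacle is not a single hard step but the layered bookkeeping in the forward direction: extending $v \mapsto a_v$ to a functional on all of $\F$, factoring $g$ through $L$, and then lifting the resulting $\FB$-valued function to an $\F$-valued $H$.
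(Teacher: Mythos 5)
Your proof is correct, but note that the paper does not actually prove Theorem \ref{lai}: it is stated as a known result, imported from Lai's paper \cite{lai} (see also \cite{charpin-kyureg-gpp}), so there is no in-paper argument to compare against. Your self-contained proof is sound on both sides. The $(\Leftarrow)$ direction and the containment $\ker(L) \subseteq \Lambda(f)$ follow exactly as you compute, since non-bijectivity of an $\FB$-linear map on the finite-dimensional space $\F$ forces a nontrivial kernel, and $Tr(u\beta v) = u\,Tr(\beta v)$ for $u \in \FB$. In the $(\Rightarrow)$ direction you correctly invoke Proposition \ref{linsp} to make $v \mapsto a_v = f(v)-f(0)$ an $\FB$-linear functional on $V = \Lambda(f)$, use the nondegeneracy of the trace form to realize its extension as $v \mapsto Tr(\beta v)$, observe that $g(x) = f(x) - Tr(\beta x)$ is constant on cosets of $V$, and factor $g$ through any $\FB$-linear $L$ with $\ker(L) = V$ before lifting the resulting $\FB$-valued function to an $\F$-valued $H$ via the surjectivity of $Tr$ (as noted at the start of Section \ref{sec-inv}). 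Each of these steps is standard and correctly executed; the only mild caution is the edge case $\alpha + \beta = 0$ excluded in Proposition \ref{linsp}, which does not disturb the linearity of $v \mapsto a_v$ since $a_{-\alpha} = -a_\alpha$ and $a_0 = 0$.
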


By Theorem \ref{lai}  any $\FB$-linear mapping with a known kernel allows to construct
a mapping with known linear translators.
The following result  is an example of this.

\begin{lemma}\label{lemma-lai}
Let $H:\F \to \F$ be an arbitrary mapping, $\gamma, \beta  \in \F,~ 
\gamma \ne 0$. Then
$\gamma$ is a $Tr\big(\beta \gamma \big) $-linear translator of $f(x)= Tr(G(x))$ where
$$
G(x) = H(x^q -\gamma^{q-1} x)+ \beta x.
$$
\end{lemma}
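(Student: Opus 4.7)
The plan is to verify the linear-translator condition \eqref{eq:ls} directly from the definition: I want to show that for every $x\in\F$ and every $u\in\FB$,
\[
f(x+u\gamma)-f(x)=u\cdot Tr(\beta\gamma).
\]

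The key step will be the observation that the mapping $L:\F\to\F$ defined by $L(x) = x^q - \gamma^{q-1}x$ is $\FB$-linear and vanishes on all of $\FB\gamma$. Indeed, $L(\gamma) = \gamma^q - \gamma^{q-1}\cdot\gamma = 0$, and since $u\mapsto u^q$ is the identity on $\FB$, the map $L$ is $\FB$-linear, so $L(u\gamma)=uL(\gamma)=0$ for every $u\in\FB$. Therefore $L(x+u\gamma)=L(x)$, and the argument fed into $H$ inside $G(x+u\gamma)$ is the same as the one inside $G(x)$.

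Once this invariance is in hand, substituting into the definition of $G$ gives
\[
G(x+u\gamma) = H(L(x)) + \beta x + u\beta\gamma = G(x)+u\beta\gamma.
\]
Applying the trace and using that $u\in\FB$, so $u$ may be pulled out of $Tr$, yields
\[
f(x+u\gamma)-f(x) \;=\; Tr(u\beta\gamma) \;=\; u\cdot Tr(\beta\gamma),
\]
which is exactly the required identity with $a = Tr(\beta\gamma)$.

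I expect no serious obstacle here: the statement is essentially a concrete instance of Theorem \ref{lai}, obtained by taking the explicit $\FB$-linear mapping $L(x) = x^q - \gamma^{q-1}x$, whose kernel contains $\FB\gamma$. The only points needing a little care are that $u^q = u$ for $u \in \FB$ (used both to check that $L$ is $\FB$-linear and to push $u$ through the trace) and that the hypothesis $\gamma\ne 0$ is implicitly used to ensure $\gamma^{q-1}$ is meaningful and the translator statement is nontrivial.
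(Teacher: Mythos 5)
Your proof is correct and is essentially the paper's own argument: the paper verifies the translator condition by the same direct substitution, expanding $(x+u\gamma)^q-\gamma^{q-1}(x+u\gamma)$ so that the terms $u\gamma^q$ cancel (i.e.\ exactly your observation that $x^q-\gamma^{q-1}x$ is $\FB$-linear with $\gamma$ in its kernel), and then pulling $u$ out of the trace. Your packaging via the map $L$ is just a slightly more conceptual phrasing of the identical computation.
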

\begin{proof}
Indeed, for any $u \in \FB$ it holds
\begin{eqnarray*}
f(x+u\gamma ) & = &  Tr\Big(H\big((x+u\gamma )^q -\gamma^{q-1} (x+u\gamma )\big)+ \beta (x+u\gamma )\Big)\\
 & = &  Tr\Big(H\big(x^q+u\gamma ^q -\gamma^{q-1} x-u\gamma ^q\big)+ \beta x+u\beta \gamma )\Big)\\
  & = &  Tr\Big(H\big(x^q  -\gamma^{q-1} x \big) + \beta x \Big) + u  Tr(\beta \gamma )\\
  & = & f(x) + u  Tr(\beta \gamma ).
\end{eqnarray*}
\end{proof}
Another family of mappings with known linear translators is given in the next lemma, which can be verified
by direct calculations similarly to Lemma \ref{lemma-lai}. 
 
\begin{lemma}\label{deriv}
Let $g: \F \to \FB$ and $\al\in\F^*$.
Then  for any $c\in\FB^*$ the element $c\al$  is a $0$-linear structure of
\[
f(x)=\sum_{u\in\FB}g(x+u\al).
\]
\end{lemma}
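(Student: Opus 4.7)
The plan is to verify the defining identity $f(x + v \cdot c\alpha) - f(x) = v \cdot 0$ for all $x \in \F$ and all $v \in \FB$, which is exactly the assertion that $c\alpha$ (note $c\alpha \ne 0$ since $c \in \FB^*$ and $\alpha \in \F^*$) is a $0$-linear translator of $f$. Everything will come down to a reindexing of the sum defining $f$.

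I would first substitute $x + v c\alpha$ into the defining expression:
\[
f(x + vc\alpha) = \sum_{u \in \FB} g\bigl(x + vc\alpha + u\alpha\bigr) = \sum_{u \in \FB} g\bigl(x + (u + vc)\alpha\bigr).
\]
Since $v, c \in \FB$, the element $vc$ lies in $\FB$, so the map $u \mapsto u + vc$ is a bijection of $\FB$ onto itself. Setting $w = u + vc$ and reindexing gives
\[
\sum_{u \in \FB} g\bigl(x + (u+vc)\alpha\bigr) = \sum_{w \in \FB} g(x + w\alpha) = f(x).
\]
Hence $f(x + vc\alpha) - f(x) = 0 = v \cdot 0$, as required.

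There is really no hard step here: the entire argument is the observation that $f$ is, by construction, the sum of all $\FB$-translates of $g$ along $\alpha$, and any further $\FB$-shift along $\alpha$ merely permutes these translates. The only point worth flagging is that the computation implicitly uses $vc \in \FB$, which is the reason we must restrict $c$ to $\FB^*$ rather than allowing $c \in \F^*$; otherwise the reindexing over $\FB$ would fail. This parallels the role played by the Frobenius-kernel element $\gamma$ in Lemma~\ref{lemma-lai}, and in the same spirit can also be seen as an instance of Theorem~\ref{lai} with the linear map $L(x) = \sum_{u \in \FB} (x + u\alpha) \cdot (\text{suitable encoding})$ whose kernel contains $\FB \alpha$.
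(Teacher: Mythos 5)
Your proof is correct, and it is exactly the ``direct calculation'' that the paper invokes (the paper omits the details, saying the lemma is verified by a computation similar to Lemma~\ref{lemma-lai}): substituting $x+vc\alpha$ and reindexing the sum over $\FB$ via $u\mapsto u+vc$ is precisely that computation. The closing aside about deducing it from Theorem~\ref{lai} is vague and unnecessary, but it does not affect the argument.
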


\begin{lemma}\label{lem:monom}
Let $n=4k$, $\beta \in \F$ and $\gamma \in  \lambda^{(q^{4k}-1)/2(q^2-1)}\mathbb{F}_{q^2}^*$
with $\lambda$ being a primitive element of $\F$.
Then $\gamma$ is a $Tr\big(\beta \gamma \big)$-linear structure of
$$
f(x) =  Tr(x^{q+1} + \beta\,x ).
$$
\end{lemma}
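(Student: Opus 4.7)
The plan is to verify the linear-translator identity $f(x+u\gamma) - f(x) = u\cdot Tr(\beta\gamma)$ directly, for every $x \in \F$ and $u \in \FB$. First I would expand $(x+u\gamma)^{q+1} = (x^q + u\gamma^q)(x + u\gamma) = x^{q+1} + u\gamma x^q + u\gamma^q x + u^2\gamma^{q+1}$ (using $u^q = u$), apply $Tr$, collect powers of $u$, and obtain
\[
f(x+u\gamma) - f(x) \;=\; u\, Tr(\beta\gamma) \;+\; u\, Tr(\gamma x^q + \gamma^q x) \;+\; u^2\, Tr(\gamma^{q+1}).
\]
The problem then reduces to showing that the last two traces vanish for all $x$.

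The one algebraic fact driving everything is $\gamma^{q^2} = -\gamma$. Writing $\gamma = \mu\nu$ with $\mu = \lambda^{(q^{4k}-1)/(2(q^2-1))}$ and $\nu \in \mathbb{F}_{q^2}^*$, one has $\nu^{q^2 - 1} = 1$, while $\mu^{q^2 - 1} = \lambda^{(q^{4k}-1)/2}$ is the unique element of order two in $\F^*$ (the description tacitly requires odd characteristic, otherwise the exponent is not an integer). Hence $\gamma^{q^2 - 1} = -1$, as claimed.

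With this in hand I would dispatch the linear-in-$u$ correction by Frobenius-invariance of $Tr$: from $\gamma^{q^2} = -\gamma$ one immediately obtains $\gamma^{q^4} = \gamma$, so $\gamma \in \mathbb{F}_{q^4}$, and since $n = 4k$, iteration gives $\gamma^{q^{n-1}} = \gamma^{q^3} = (\gamma^{q^2})^q = -\gamma^q$. Therefore
\[
Tr(\gamma x^q) \;=\; Tr\bigl((\gamma x^q)^{q^{n-1}}\bigr) \;=\; Tr(\gamma^{q^{n-1}} x) \;=\; -Tr(\gamma^q x),
\]
so $Tr(\gamma x^q + \gamma^q x) = 0$. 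For the quadratic term, $(\gamma^{q+1})^q = \gamma^q \gamma^{q^2} = -\gamma^{q+1}$, so $\gamma^{q+1}$ lies in $\mathbb{F}_{q^2}$ and is negated by the $q$-th power Frobenius over $\FB$; the $n = 4k$ Galois conjugates then split into $2k$ copies of $\gamma^{q+1}$ and $2k$ copies of $-\gamma^{q+1}$, yielding $Tr(\gamma^{q+1}) = 0$.

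The main (and essentially only) obstacle is recognising that $\gamma^{q^2} = -\gamma$ is exactly the property the coset description is engineered to enforce, and that the divisibility $4 \mid n$ is precisely what allows both auxiliary traces to cancel. Once these two observations are isolated, the verification is routine trace bookkeeping, much in the spirit of Lemma \ref{lemma-lai}.
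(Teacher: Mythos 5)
Your proof is correct and follows essentially the same route as the paper: establish $\gamma^{q^2}=-\gamma$ from the coset description, deduce $\gamma^q+\gamma^{q^{n-1}}=0$ and $Tr(\gamma^{q+1})=0$, then expand $(x+u\gamma)^{q+1}$ and use linearity and Frobenius-invariance of the trace. The only differences are cosmetic: you justify $\gamma^{q^2-1}=-1$ (and note the implicit odd-characteristic assumption) where the paper merely asserts it, and you obtain $\gamma^{q^{n-1}}=-\gamma^q$ via $\gamma\in\mathbb{F}_{q^4}$ rather than by raising $\gamma^{q^2}+\gamma=0$ to the power $q^{n-1}$.
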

\begin{proof}
Note that $\gamma ^{q^2-1} = -1$ or equivalently $\gamma ^{q^2} + \gamma =0$. Taking the latter identity to the power
$q^{n-1}$, we obtain $\gamma ^q + \gamma ^{q^{n-1}} =0$. Further, since $(\gamma ^{q+1})^{q-1} = -1$, it holds
$(\gamma ^{q+1})^{q} + \gamma ^{q+1} =0$, and consequently $Tr(\gamma ^{q+1})=0$.
Using this properties of $\gamma$, for any $x \in \F$ and  $u \in \FB$ we obtain
\begin{eqnarray*}
f(x + u \gamma) &  = &   Tr((x + u\gamma)^{q+1} + \beta x + \beta \gamma u ) \\ & = & Tr( x^{q+1}   + \gamma ^{q}ux +\gamma u x^{q} + \gamma ^{q+1}u^2 + \beta x+ u\beta\gamma ) \\
               & = & f(x) + u\,Tr((\gamma ^q + \gamma ^{q^{n-1}} )x) + u^2  Tr(\gamma^{q+1}) + u Tr(\gamma\beta) \\
               & = &  f(x)  + u Tr(\gamma\beta).
\end{eqnarray*}
\end{proof}

Lemmas \ref{lemma-lai}, \ref{deriv}, as well as the Theorem \ref{pp-linstr}, are straightforward generalizations of  results given in \cite{charpin-kyureg-gpp} 
in the case of  prime  $q$. The proof of Theorem \ref{pp-linstr} differs from the one given in \cite{charpin-kyureg-gpp}.

\begin{theorem}\label{pp-linstr}
Let   $\gamma \in \F$ be a 
$b$-linear translator of $f :\F \to \FB$. 
\begin{itemize}
\item[(a)]
Then $F(x) = x + \gamma \,f(x)$ is
 a  permutation of $\F$ if $b\ne -1$.
\item[(b)] Then 
 $F(x) = x + \gamma \,f(x)$ is
 a  $q-\,to\,-1$ mapping of $\F$  if $b = -1$.
\end{itemize}
\end{theorem}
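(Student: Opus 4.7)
My plan is to exploit the defining identity of a linear translator, namely $f(x+u\gamma)=f(x)+ub$ for all $u\in\FB$, to evaluate $f\circ F$. Specifically, since $f(x)\in\FB$, applying $f$ to $F(x)=x+\gamma f(x)$ with $u=f(x)$ gives
\EQ
f(F(x)) = f(x+\gamma f(x)) = f(x) + b\,f(x) = (1+b)\,f(x).
\EN
This single computation is the engine of both parts.

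For part (a), assume $b\ne -1$. Then $1+b\in\FB^*$, so the identity above recovers $f(x)=(1+b)^{-1}f(F(x))$. Setting $y=F(x)$ rewrites $x=y-\gamma f(x)=y-(1+b)^{-1}\gamma f(y)$, which is an explicit candidate for $F^{-1}$. I would then verify by a short direct calculation — again using the translator property with $u=-(1+b)^{-1}f(y)\in\FB$ — that $F\bigl(y-(1+b)^{-1}\gamma f(y)\bigr)=y$, which simultaneously proves injectivity, surjectivity, and produces the inverse formula that is anyway needed later in Section 2.

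For part (b), assume $b=-1$. The engine identity now reads $f(F(x))=0$, but more usefully the translator property directly yields, for every $u\in\FB$,
\EQ
F(x+u\gamma)=x+u\gamma+\gamma\bigl(f(x)-u\bigr)=x+\gamma f(x)=F(x),
\EN
so $F$ is constant on every coset $x+\gamma\FB$. Conversely, if $F(x)=F(x')$, rearranging gives $x-x'=\gamma\bigl(f(x')-f(x)\bigr)\in\gamma\FB$, so the fibers are \emph{exactly} these cosets. Since each coset has size $q$ (as $\gamma\ne 0$), $F$ is $q$-to-$1$.

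The only mild subtlety — and where I would be careful — is the implicit use of the translator identity with a field element $u$ that depends on $x$ (such as $u=f(x)$ or $u=-(1+b)^{-1}f(y)$); this is legitimate because $f$ takes values in $\FB$, so these $u$'s lie in $\FB$ as required. Beyond that the argument is formal and requires no structural insight past the single computation of $f\circ F$.
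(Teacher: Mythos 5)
Your argument is correct, and part (a) takes a genuinely different route from the paper. The paper proves (a) by a collision analysis: assuming $F(x)=F(y)$ it writes $x=y+\gamma a$ with $a=f(y)-f(x)$, uses the translator identity to get $a=-ab$, and concludes $a=0$ (hence $x=y$) when $b\ne -1$, so bijectivity follows from injectivity on a finite set. You instead compute $f(F(x))=(1+b)f(x)$ and from it construct the explicit candidate inverse $y\mapsto y-(1+b)^{-1}\gamma\, f(y)$, verifying $F\bigl(y-(1+b)^{-1}\gamma f(y)\bigr)=y$ directly; your verification is valid (the computation $f\bigl(y-(1+b)^{-1}\gamma f(y)\bigr)=(1+b)^{-1}f(y)$ makes it work out), and your use of $x$-dependent values $u\in\FB$ in the translator identity is indeed legitimate, as you note. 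What your route buys is that it proves the statement of Theorem \ref{inverse}, the formula $F^{-1}(x)=x-\frac{\gamma}{b+1}f(x)$, at the same time, whereas the paper obtains that inverse only later as a consequence of the $k$-fold composition formula of Lemma \ref{k-comp} (which the paper needs anyway for the cycle structure, so nothing is wasted there). Your part (b) is essentially the paper's argument: both show that the fibers of $F$ are exactly the cosets $x+\gamma\FB$, of size $q$ since $\gamma\ne 0$ by the definition of a linear translator.
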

\begin{proof}
Let $x, y \in \F$  satisfy $F(x) = F(y)$. Then
\begin{equation}\label{eq-value}
F(x) = x + \gamma \,f(x) = y + \gamma \,f(y) = F(y),
\end{equation}
and hence
$$
x = y + \gamma \,\left(f(y)-f(x)\right)  = y + \gamma a,
$$
where $a =  f(y)-f(x)\in \FB$. Using the definition of a linear translator we get
$$
a = f(y)-f(x) = - \left(f(y + \gamma a ) - f(y)\right) = -ab.
$$
If $b \ne -1$, then the last equality implies $a=0$ and hence $f(y) = f(x)$. Finally, (\ref{eq-value}) forces $x = y$,
which proves (a). Suppose $b = -1$. Then the above arguments show that $F(x) = F(y)$ only if $x =  y + \gamma a$ for some $a \in \FB$.
To complete the proof of (b) it remains to see that
$$
F(y + \gamma a) = y + \gamma a + \gamma f(y + \gamma a) =  y + \gamma a + \gamma f(y) - \gamma a = F(y)
$$
for any $a \in \FB$.
\end{proof}

If we  choose $f(x) = Tr(x)$, then Theorem \ref{pp-linstr} states that the mapping
$
x \mapsto x + \gamma  \,Tr(x)
$
is a permutation of $\F$ if and only if $Tr(\gamma) \ne -1$. Consequently,
the mapping 
$x \mapsto  Tr(x) + \delta\, x $
is a permutation of $\F$ if and only if $\delta \ne 0$ and $Tr(\delta^{-1}) \ne -1$, and thus 
$$
\mathcal{M}(Tr) = \{ \delta \in \F^* ~|~ Tr(\delta^{-1}) \ne -1 \}.
$$
In particular $|\mathcal{M}(Tr)| = q^n - q^{n-1} - 1$. The mapping $Tr(x)$ was mentioned  in \cite{redei} to show 
that certain bounds on $|\mathcal{M}(\cdot)|$ are tight, see \cite{ball} for further details.

Let us consider an arbitrary $f :\F \to \FB$.
The mapping $f(x) + \delta x$ is a permutation of $\F$ if and only if $\delta \ne 0$ and $x +  \delta^{-1}f(x)$ is a permutation.
So by Theorem  \ref{pp-linstr}, the inverse of any $b$-linear translator $\delta^{-1}$  of $f$  with $b = f(\delta^{-1}) - f(0) \ne -1$
is contained in $\mathcal{M}(f)$. So it holds
\begin{equation}\label{eq:dir}
\{\delta \in \F^* ~|~ \delta^{-1} \in \Lambda^*(f)~ \mbox{ and }~ f(\delta^{-1}) - f(0) \ne -1 \}  \subseteq \mathcal{M}(f).
\end{equation}
This shows that the mappings $f : \F \to \FB$
with many linear translators determine few directions. The next result 
describes such mappings.

\begin{theorem}\label{th:dir}
Let $g: \FB \to \FB$ be such that $g(0) =0$ and $-1 \notin \{ g(y) ~|~ y \in \FB\}$. 
Given a non-zero $\alpha \in \F$ define $h: \F \to \FB$ by $h(x) = g(Tr(\alpha x))$ for any $x \in \F$. 
Then $ \{\delta \in \F^* ~|~ Tr( \alpha \delta ^{-1}) =0 \} \subseteq \mathcal{M}(h)$ and hence $|\mathcal{M}(h)| \geq q^{n-1}-1$.
\end{theorem}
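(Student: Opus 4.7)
The plan is to deduce everything directly from the inclusion (\ref{eq:dir}), by exhibiting a large supply of $0$-linear translators of $h$. The key observation is that $h$ factors through the single $\FB$-linear form $x \mapsto Tr(\alpha x)$, so any $\gamma \in \F$ killed by this form will leave $h$ invariant when added to the argument.

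Concretely, I would first fix $\gamma \in \F^*$ with $Tr(\alpha \gamma) = 0$ and use the $\FB$-linearity of $Tr$ to obtain $Tr(\alpha(x + u\gamma)) = Tr(\alpha x)$ for every $x \in \F$ and $u \in \FB$, which gives $h(x + u\gamma) = g(Tr(\alpha x)) = h(x)$. This identifies $\gamma$ as a $0$-linear translator of $h$ in the sense of (\ref{eq:ls}). Now I would apply (\ref{eq:dir}) with $\delta = \gamma^{-1}$: since $g(0) = 0$ we have $h(0) = 0$, and since $Tr(\alpha \gamma) = 0$ we also have $h(\delta^{-1}) = g(0) = 0$, so the associated translator constant $b = h(\delta^{-1}) - h(0)$ equals $0 \ne -1$. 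Hence $\delta \in \mathcal{M}(h)$, which is precisely the claimed inclusion.

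For the cardinality bound, I would note that $\alpha \ne 0$ makes $y \mapsto Tr(\alpha y)$ a surjective $\FB$-linear form on $\F$, whose kernel is an $\FB$-hyperplane of size $q^{n-1}$. The bijection $\delta \leftrightarrow \delta^{-1}$ identifies $\{\delta \in \F^* : Tr(\alpha \delta^{-1}) = 0\}$ with this hyperplane minus the origin, giving $q^{n-1} - 1$ elements. There is no genuine obstacle: the argument is forced by the structure of $h$. It is worth noting that the hypothesis $-1 \notin \{g(y) : y \in \FB\}$ is not even invoked for this particular subset of $\mathcal{M}(h)$; only $g(0) = 0$ is used, which is exactly what forces $b = 0$. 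The stronger assumption would become relevant only if one wished to enlarge the subset by exploiting those $\gamma$ with $Tr(\alpha\gamma)$ equal to a non-zero linear translator of $g$ on $\FB$, in which case the corresponding $b = g(Tr(\alpha\gamma))$ would need to avoid $-1$.
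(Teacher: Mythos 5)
Your proof is correct and follows essentially the paper's route: both arguments establish that the hyperplane $\{ y \in \F^* ~|~ Tr(\alpha y) = 0\}$ consists of ($0$-)linear translators of $h$ and then invoke the inclusion (\ref{eq:dir}) together with the count $q^{n-1}-1$; you verify the translator property by a direct trace computation where the paper cites Theorem \ref{lai}. Your closing remark is also accurate: since the translator constant at such $\delta^{-1}$ is $0 \ne -1$, the hypothesis $-1 \notin \{g(y) ~|~ y \in \FB\}$ (which the paper uses to discharge the condition $g(Tr(\alpha\delta^{-1})) \ne -1$) is not actually needed for this particular subset of $\mathcal{M}(h)$.
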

\begin{proof}
Theorem \ref{lai} implies that $ \{ y \in \F^* ~|~ Tr(\alpha y) = 0\} \subseteq \Lambda^*(h)$.
Then from (\ref{eq:dir}) it follows
$$
\{\delta \in \F^* ~|~  g(Tr(\alpha \delta^{-1})) \ne -1 \mbox{ and } Tr(\alpha \delta ^{-1}) =0 \} \subseteq \mathcal{M}(h). 
$$
It remains to note that 
$$
\{\delta \in \F^* ~|~ g(Tr(\alpha \delta^{-1})) \ne -1 \mbox{ and } Tr(\alpha \delta ^{-1}) =0 \}  =  
\{\delta \in \F^* ~|~   Tr(\alpha \delta ^{-1}) =0 \}
$$
since by the choice of $g$ the element $-1$ does not belong to its image set.
\end{proof}

Remark that if in Theorem \ref{th:dir} the mapping $g$  is not  affine on $\FB$, then 
$h$ is not affine on $\F$. Hence $ \{ y \in \F^* ~|~ Tr(\alpha y) = 0\} = \Lambda^*(h)$
for such mappings using Proposition \ref{linsp}.
Next we give an explicit example of such a mapping $h$ if $q$ is odd.

\begin{ex}
Let $q$ be odd and $q-1 = 2^i\cdot d$ with $d$ odd. Let $h: \F \to \FB$ be defined by
$$
h: x \mapsto \Big(Tr(x)\Big)^{2^{i}} ~\mbox{ for any }~x \in \F.
$$
Then $ \{\delta \in \F^* ~|~ Tr( \delta ^{-1}) =0 \} \subseteq \mathcal{M}(h)$.
\end{ex}

It is obvious that the permutations described in Theorem \ref{pp-linstr} are never orthomorphisms, and hence
never complete if $q$ is even. Corollary \ref{complete} characterizes all such 
complete mappings.

\begin{corollary}\label{complete}
Let $q$ be odd and  $\gamma \in \F$ be a 
$b$-linear translator of $f :\F \to \FB$. Then 
 $F(x) = x + \gamma \,f(x)$ is
 a  complete mapping of $\F$ if and only if $b \notin \{-1, -2\}$.
\end{corollary}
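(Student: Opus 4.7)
The plan is to reduce the corollary to two applications of Theorem~\ref{pp-linstr}, one for $F$ itself and one for $F(x)+x$, using that scaling by a nonzero element of $\FB$ behaves predictably on linear translators by Proposition~\ref{linsp}.

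First I would recall that $F$ is a complete mapping precisely when both $F(x)$ and $F(x)+x$ are permutations of $\F$. Theorem~\ref{pp-linstr}(a) handles $F$ itself: the mapping $x+\gamma f(x)$ is a permutation iff $b\ne -1$. So the task reduces to deciding when $F(x)+x = 2x + \gamma f(x)$ is a permutation.

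Next, since $q$ is odd, $2\in\FB^*$ is invertible, so $F(x)+x$ is a permutation of $\F$ iff $\tfrac{1}{2}(F(x)+x) = x + (\gamma/2)\,f(x)$ is a permutation of $\F$. Now I would invoke Proposition~\ref{linsp}: if $\gamma$ is a $b$-linear translator of $f$, then $\gamma/2 = (1/2)\cdot \gamma$ is a $(b/2)$-linear translator of $f$. Applying Theorem~\ref{pp-linstr}(a) once more to the permutation $x+(\gamma/2)f(x)$, we obtain that it is a permutation iff $b/2 \ne -1$, equivalently $b\ne -2$.

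Combining the two conditions yields that $F$ is a complete mapping iff $b\ne -1$ and $b\ne -2$, i.e.\ $b\notin\{-1,-2\}$, which is the desired statement. There is essentially no substantial obstacle here; the only point deserving attention is that the reduction $F(x)+x \mapsto \tfrac{1}{2}(F(x)+x)$ relies crucially on $\mathrm{char}(\F)$ being odd, which is exactly the hypothesis $q$ odd. In even characteristic the reduction collapses and $F(x)+x = \gamma f(x)$ takes values only in $\FB\cdot\gamma$, which is why such $F$ is never complete when $q$ is even, consistent with the comment preceding the corollary.
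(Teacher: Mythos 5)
Your proposal is correct and follows essentially the same route as the paper: reduce $F(x)+x=2x+\gamma f(x)$ to $x+(\gamma/2)f(x)$, use Proposition~\ref{linsp} to see that $\gamma/2$ is a $(b/2)$-linear translator, and apply Theorem~\ref{pp-linstr} to both $F$ and this rescaled map. The closing remark about even characteristic is a nice bonus but matches the observation the paper already makes just before the corollary.
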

\begin{proof}
Indeed $F$ is a permutation if and only if $b \ne -1$ by Theorem \ref{pp-linstr}.
Consider $F(x)+x = 2x + \gamma\, f(x)$. The latter is a permutation of $\F$
if and only if $x + \frac{\gamma}{2}\,f(x)$ is so. Proposition \ref{linsp} shows that
${\gamma}/{2}$ is a $b/2$-linear translator of $f$. Thus $F(x) + x$ is a permutation of $\F$
if and only if $b \ne -2$, completing the proof.
\end{proof}

Our next goal is to determine the cycle structure and the inverse of a permutation described in Theorem \ref{pp-linstr}.
For an integer $k\geq 1$, define 
$$
F_k(x) = \underbrace{F\circ F \circ \ldots \circ F}_{k \mbox{ {\tiny times}}}(x)
$$
to be the $k$-fold  composition of the mapping $F$ with itself.

\begin{lemma}\label{k-comp}
Let $\gamma \in \F$ be a 
$b$-linear translator of $f:\F \to \FB$ and
 $F(x) = x + \gamma \,f(x)$. Then  for any $k \geq 1$ it holds
$$
F_k(x) = x +  B_k \,\gamma \,f(x),
$$
where
\begin{equation}\label{eq:B}
B_k = 1 + (b+1) + \ldots + (b+1)^{k-1} = \left\{ \begin{array}{ll} k & \mbox{ if } b =0 \\ 
                                                               \frac{(b+1)^k-1}{b} & \mbox{ if } b \ne 0.
\end{array} \right.
\end{equation}
\end{lemma}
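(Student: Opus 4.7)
The plan is to proceed by induction on $k$. The base case $k=1$ is immediate: $F_1(x) = F(x) = x + \gamma f(x)$ matches $B_1 = 1$.

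For the inductive step, suppose $F_k(x) = x + B_k\,\gamma\,f(x)$. Then by definition of composition,
\[
F_{k+1}(x) = F(F_k(x)) = F_k(x) + \gamma\,f(F_k(x)) = x + B_k\,\gamma\,f(x) + \gamma\,f\bigl(x + B_k f(x)\,\gamma\bigr).
\]
The crucial observation here is that $B_k f(x) \in \FB$: since $b \in \FB$, every term $(b+1)^j$ in (\ref{eq:B}) lies in $\FB$, so $B_k \in \FB$, and $f(x)\in\FB$ by assumption. This is exactly what allows us to invoke the defining property of a $b$-linear translator with scalar $u = B_k f(x)$, yielding
\[
f\bigl(x + B_k f(x)\,\gamma\bigr) = f(x) + B_k f(x)\cdot b = f(x)\bigl(1 + B_k b\bigr).
\]

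Substituting back gives
\[
F_{k+1}(x) = x + \gamma\,f(x)\bigl(B_k + 1 + B_k b\bigr) = x + \gamma\,f(x)\bigl(1 + (b+1) B_k\bigr).
\]
To close the induction I just need to recognize $B_{k+1} = 1 + (b+1) B_k$, which is immediate from the definition $B_k = \sum_{j=0}^{k-1}(b+1)^j$. Finally, the closed-form evaluation in (\ref{eq:B}) follows from the standard geometric series formula when $b \ne 0$, and reduces to counting $k$ ones when $b = 0$.

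The proof is essentially a one-line induction; the only point requiring any care is the verification that $B_k f(x)$ remains in $\FB$ so that the linear translator hypothesis applies. Since $\Lambda(f)$ is an $\FB$-subspace (Proposition \ref{linsp}) and the translator property (\ref{eq:ls}) is stated for all $u \in \FB$, this point is harmless, but it is the spot where one must be attentive when $q > 2$.
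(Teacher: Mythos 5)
Your proof is correct and follows essentially the same route as the paper: induction on $k$, applying the $b$-linear translator property with the scalar $B_k f(x) \in \FB$ and closing with the recurrence $B_{k+1} = 1 + (b+1)B_k$. The only cosmetic difference is that you step from $k$ to $k+1$ while the paper steps from $k-1$ to $k$.
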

\begin{proof}
Our proof is by induction on $k$. Clearly it holds for $k=1$.
For $k \geq 2$ we have
\begin{eqnarray*}
F_k(x) & = & F \circ F_{k-1}(x)  =  \left( x + \gamma \,f(x)\right) \circ \left(x + B_{k-1} \,\gamma \,f(x)\right) \\
& = & x + B_{k-1} \,\gamma \,f(x) + \gamma \,f(x + B_{k-1} \,\gamma \,f(x)).
\end{eqnarray*}
Since $\gamma$ is a $b$-linear translator for $f$ and   $B_{k-1} \, f(x) \in \FB$, it holds
$$
f(x + B_{k-1} \,\gamma \,f(x)) = f(x) + B_{k-1}  \,f(x) \, b.
$$
Then we get
$$
F_k(x) = x + (B_{k-1}  + 1 +   B_{k-1}\,b ) \gamma \,f(x)  =  x + (1 + (b+1) B_{k-1})\gamma \,f(x).
$$
It remains to note that $1 + (b+1) B_{k-1} = B_k$.
\end{proof}
As a direct consequence of Lemma \ref{k-comp}, we determine the inverse mapping and the cycle structure of the considered permutations.

\begin{theorem}\label{inverse}
Let $\gamma \in \F$ be a 
$b$-linear translator of $f : \F \to \FB$ and $b \ne -1$. Then the inverse mapping of the permutation
$F(x) = x + \gamma \,f(x)$ is $$F^{-1}(x) = x - \frac{\gamma}{b+1} \,f(x).$$
\end{theorem}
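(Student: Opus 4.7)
The plan is to verify the claimed formula by direct computation, mirroring a single step of the induction that proved Lemma \ref{k-comp}. By Theorem \ref{pp-linstr}(a), the hypothesis $b \ne -1$ already guarantees that $F$ is a permutation, so it suffices to check that the candidate inverse $G(x) = x - \frac{\gamma}{b+1} f(x)$ satisfies $F \circ G = \mathrm{id}$.

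The core of the argument is the following observation: for any constant $c \in \FB$, the composition $F(x + c\gamma f(x))$ can be simplified using the translator property. Indeed,
\[
F(x + c\gamma f(x)) = x + c\gamma f(x) + \gamma\, f\!\left(x + c f(x)\,\gamma\right),
\]
and since $c f(x) \in \FB$, the definition of a $b$-linear translator gives $f(x + cf(x)\gamma) = f(x) + cf(x)\,b = (1+cb)f(x)$. Combining these yields
\[
F(x + c\gamma f(x)) = x + \bigl(1 + c(b+1)\bigr)\gamma f(x).
\]

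To make this equal $x$, one chooses $c \in \FB$ satisfying $1 + c(b+1) = 0$; because $b \ne -1$ this is solvable, with unique solution $c = -1/(b+1)$, which recovers the stated formula for $F^{-1}$. There is no real obstacle here --- the only point to verify is that the multiplier of $\gamma$ lies in $\FB$, which is automatic since $f$ takes values in $\FB$ and $b+1 \in \FB^*$. This is precisely why the result is presented as a direct consequence of Lemma \ref{k-comp}: it is one step of that lemma's inductive computation, solved for $c$ rather than propagated through $k$.
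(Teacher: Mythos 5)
Your verification is correct: the identity $F(x + c\gamma f(x)) = x + \bigl(1+c(b+1)\bigr)\gamma f(x)$ follows from one application of the translator property with $u = cf(x) \in \FB$, and since $F$ is already a permutation by Theorem \ref{pp-linstr}(a), the relation $F\circ G = \mathrm{id}$ with $c = -1/(b+1)$ does pin down $G$ as $F^{-1}$. Your route differs from the paper's in a mild but genuine way: the paper does not guess-and-verify, but instead uses Lemma \ref{k-comp} to recognize $F^{-1}$ as an explicit iterate of $F$ itself --- $F_{p-1}$ when $b=0$ (with $p$ the characteristic, since $F_p = \mathrm{id}$) and $F_{l-1}$ when $b \ne 0$, where $l$ is the order of $b+1$ in $\FB^*$ --- and then simplifies $B_{l-1} = \frac{(b+1)^{l-1}-1}{b}$ to $-\frac{1}{b+1}$. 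Your argument is shorter and avoids both the case split $b=0$ versus $b\ne 0$ and any appeal to the order of $b+1$; what the paper's approach buys in exchange is the structural fact that the inverse is itself a power of $F$, which feeds directly into the cycle-structure theorem that follows. Both proofs are sound; yours is the more elementary verification, essentially (as you note) a single step of the Lemma \ref{k-comp} computation solved for the coefficient.
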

\begin{proof}
Let $b=0$. 
Then from  Lemma \ref{k-comp} it follows that $F_p(x) =x$   where $p$ is the characteristic of $\F$. Hence   
the inverse mapping of $F$ is   $F_{p-1}(x) = x - \gamma \,f(x)$.
Let  $b\ne 0$. Then again  using Lemma \ref{k-comp}  the inverse mapping of $F$ is $F_{l-1}$, where $l$ is the  order of $~b+1$ in $\FB^*$.
It remains to note that
\begin{eqnarray*}
F_{l-1}(x)& = & x + \frac{(b+1)^{l-1}-1}{b}\,\gamma \,f(x)\\ & = & x + \left(\frac{1}{b+1}-1\right)\frac{1}{b}\, \gamma \,f(x) \\ & = & x - \frac{\gamma}{b+1} \,f(x),
\end{eqnarray*}
since $(b+1)^{l-1} = (b+1)^{-1}$.
\end{proof}

Obviously, an element $u \in \F$ is a fixed point for $F(x) = x + \gamma f(x), ~\gamma \ne 0,$ if and only if $f(u)=0$.
The next theorem describes the  cycle decomposition of such permutations.

\begin{theorem}
Let $\gamma \in \F$ be a 
$b$-linear translator of $f:  \F \to \FB$ and $b \ne -1$. Consider the permutation defined by
$F(x) = x + \gamma \,f(x)$. Set 
$$N = q^n - |\{x \in \F ~|~ f(x) =0\}|.$$

\begin{itemize}
\item[(a)] If  $b=0$, then the permutation $F$ is a composition of $N/p$ disjoint cycles  of length $p$ (in the 
symmetric group $S_{_{\F}}$), where $p$ is the characteristic of $\F$. 
Moreover, an element $u \in \F$ with $f(u) \ne 0$ is contained in the cycle $(u_0, u_1, \ldots, u_{p-1})$, where $u_k = u + k\,\gamma f(u)$.
\item[(b)] If $b\ne 0$, then  the permutation $F$ is a  composition of $N/l$ disjoint cycles  of length $l$, where $l$ is the  order of $(b+1)$ in $\FB^*$.
Moreover, an element $u \in \F$ with $f(u) \ne 0$ is contained in the cycle $(u_0, u_1, \ldots, u_{l-1})$, where $u_k = u + B_k\,\gamma f(u)$
and $B_k$ is defined by (\ref{eq:B}).
\end{itemize}
\end{theorem}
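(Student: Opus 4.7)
The proof will be a direct consequence of Lemma \ref{k-comp}, so the plan is to extract from the formula $F_k(x) = x + B_k\,\gamma\,f(x)$ all the cycle information. First, I would dispose of the fixed points: since $\gamma \ne 0$, the identity $F(u) = u$ is equivalent to $f(u) = 0$, so $F$ has exactly $q^n - N$ fixed points, which contribute trivially to the cycle decomposition. The remaining $N$ elements, those with $f(u) \ne 0$, are the ones whose orbits we must describe.

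Fix such a $u$. By Lemma \ref{k-comp}, $F_k(u) = u + B_k\,\gamma\,f(u)$, so computing the length of the orbit of $u$ reduces to finding the smallest $k \geq 1$ with $B_k = 0$ (using that $\gamma f(u) \ne 0$). In case (a), where $b=0$, we have $B_k = k \cdot 1 \in \FB$; viewing this in the prime subfield $\mathbb{F}_p \subseteq \FB$ we get $B_k = 0$ exactly when $p \mid k$, so the orbit length is $p$. In case (b), where $b \ne 0$, the formula $B_k = \frac{(b+1)^k - 1}{b}$ vanishes precisely when $(b+1)^k = 1$, i.e., when $l \mid k$ with $l$ the multiplicative order of $b+1$; hence the orbit length is $l$.

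To finish, I must check that in each case the listed elements $u_0, u_1, \ldots$ really are distinct (so the orbit length is exactly $p$ or $l$, not a proper divisor). This reduces to showing that $B_0, B_1, \ldots, B_{p-1}$ (resp.\ $B_0, \ldots, B_{l-1}$) are pairwise distinct: in case (a) the values $0,1,\ldots,p-1$ are distinct in $\mathbb{F}_p$, and in case (b) the equality $B_i = B_j$ is equivalent to $(b+1)^i = (b+1)^j$, which for $0 \le i < j < l$ contradicts the minimality of $l$. Consistency of the indexing, namely $F(u_k) = u_{k+1}$, is immediate from $F(u_k) = F \circ F_k(u) = F_{k+1}(u)$. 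Since every non-fixed orbit has the same length (call it $L \in \{p, l\}$), partitioning the $N$ non-fixed elements yields $N/L$ disjoint cycles, each of the stated form.

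The argument is essentially routine once Lemma \ref{k-comp} is in hand; the only mildly delicate point is the distinctness of $B_0, \ldots, B_{L-1}$, but this is a short observation using the minimality of $p$ (resp.\ $l$), so I do not anticipate any real obstacle.
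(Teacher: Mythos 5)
Your proposal is correct and follows exactly the route the paper intends: the paper's entire proof is the one-line remark that the theorem follows from Lemma \ref{k-comp}, and your argument is precisely the natural fleshing-out of that remark (fixed points, minimal $k$ with $B_k=0$, distinctness of the $u_k$, and the count $N/L$). No gaps; the distinctness check via minimality of $p$ resp.\ $l$ is the right finishing touch.
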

\begin{proof}
The proof follows from  Lemma \ref{k-comp}.
\end{proof}

\begin{remark}
A particular case of Corollary \ref{complete} and Theorem \ref{inverse} for $b=0$ are proved in \cite{marcos} for
 permutations $x+h(Tr(x))$, where $h:\FB\to \FB$ and $q$ is a prime number. In \cite{marcos} and \cite{zieve}
further permutations of $\F$ involving additive  mappings  are constructed.  
\end{remark}

\section{Families of permutation polynomials} \label{sec-pp}

In this section we demonstrate several applications of  Theorems \ref{pp-linstr} and \ref{inverse} to obtain explicit constructions of permutation polynomials.
Firstly, observe that combining  Theorem  \ref{pp-linstr} and Lemma \ref{lemma-lai} we obtain:

\begin{theorem}
Let $H(X) \in \F[X],~\gamma, \beta  \in \F$. Then
$$
F(X) = X + \gamma\, Tr\big(H(X^q -\gamma^{q-1} X)+ \beta X\big)
$$
is a permutation polynomial of $\F$ if and only if $Tr(\gamma \beta) \ne -1$.

\end{theorem}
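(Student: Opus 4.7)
The plan is to recognize that this theorem is essentially a direct consequence of combining Lemma \ref{lemma-lai} with both parts (a) and (b) of Theorem \ref{pp-linstr}. Lemma \ref{lemma-lai} was specifically designed to produce mappings $f:\F\to\FB$ with $\gamma$ as a known linear translator, and the expression inside the trace in the statement of this theorem matches exactly the form supplied by Lemma \ref{lemma-lai}. So one first identifies $f$ and the associated value $b$, and then reads off the permutation criterion from Theorem \ref{pp-linstr}.

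More concretely, I would first dispose of the degenerate case $\gamma = 0$, where $F(X) = X$ is trivially a permutation and $Tr(\gamma\beta) = 0 \ne -1$, so both sides of the equivalence hold. Then, assuming $\gamma \ne 0$, set $G(x) = H(x^q - \gamma^{q-1} x) + \beta x$ and $f(x) = Tr(G(x))$, so that $F(x) = x + \gamma\, f(x)$. By Lemma \ref{lemma-lai}, $\gamma$ is a $b$-linear translator of $f$ with $b = Tr(\beta\gamma)$.

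For the forward implication, if $Tr(\gamma\beta) \ne -1$, then by part (a) of Theorem \ref{pp-linstr} the map $F$ is a permutation of $\F$. Conversely, if $Tr(\gamma\beta) = -1$, then part (b) of Theorem \ref{pp-linstr} says $F$ is a $q$-to-$1$ mapping; since $q \geq 2$, it fails to be injective and hence cannot be a permutation. This gives the equivalence.

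I do not expect any real obstacle here: the content of the theorem is exactly the combined payload of the two earlier results, and the only thing that even requires attention is remembering to invoke part (b) of Theorem \ref{pp-linstr} to get the "only if" direction, rather than stopping at part (a) which would give only the sufficient condition. No case analysis on $H$ or $\beta$ is needed, because Lemma \ref{lemma-lai} computes the translator constant $b$ in closed form regardless of $H$.
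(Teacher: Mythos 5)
Your proof is correct and follows exactly the paper's own route: the paper obtains this theorem by simply combining Lemma \ref{lemma-lai} (which identifies $\gamma$ as a $Tr(\beta\gamma)$-linear translator of $f(x)=Tr(H(x^q-\gamma^{q-1}x)+\beta x)$) with both parts of Theorem \ref{pp-linstr}. Your explicit handling of the degenerate case $\gamma=0$ and your remark that part (b) is what delivers the ``only if'' direction are small but welcome additions to what the paper leaves implicit.
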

Further families of permutation polynomials may be obtained using the following extension
of  Theorem  \ref{pp-linstr}.

\begin{theorem}\label{pp-lpp}
Let $L:\F \to \F$ be an $\FB$-linear permutation of $\F$.
Further, suppose $\gamma \in \F$  is a $b$-linear translator of $f:\F \to \FB$.
\begin{itemize}
\item[(a)]
Then $F(x) = L(x) + L(\gamma) \,f(x)$ is
 a  permutation of $\F$ if $b\ne -1$.
\item[(b)] Then 
 $F(x) = L(x) + L(\gamma) \,f(x)$ is
 a  $q-\,to\,-1$ mapping of $\F$  if $b = -1$.
\end{itemize}
\end{theorem}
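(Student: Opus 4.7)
The plan is to reduce this directly to Theorem \ref{pp-linstr} by recognizing $F$ as the composition of $L$ with the permutation from that theorem. The key observation is that for every $x\in\F$ the value $f(x)$ lies in $\FB$, and $L$ is $\FB$-linear, so we may pull the scalar $f(x)$ inside $L$:
\begin{equation*}
F(x) = L(x) + L(\gamma)\,f(x) = L(x) + L\bigl(\gamma\, f(x)\bigr) = L\bigl(x + \gamma\, f(x)\bigr).
\end{equation*}
Setting $G(x) := x + \gamma\, f(x)$, this rewrites as $F = L \circ G$.

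Once this identity is in hand, both statements follow immediately. Since $L$ is given to be a bijection of $\F$, the composite $L \circ G$ is a permutation (resp.\ a $q$-to-$1$ map) precisely when $G$ is a permutation (resp.\ a $q$-to-$1$ map). By Theorem \ref{pp-linstr}, $G$ is a permutation if $b \ne -1$, proving (a), and $G$ is $q$-to-$1$ if $b = -1$, proving (b).

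The only subtle point, and the step worth emphasizing, is the use of $\FB$-linearity of $L$ together with $f(x) \in \FB$ to justify $L(\gamma\, f(x)) = f(x)\, L(\gamma)$. This is not $\F$-linearity, so it is essential that the coefficient $f(x)$ really belongs to the base field $\FB$; otherwise the identity $F = L \circ G$ would fail. Apart from this routine verification, there is no combinatorial obstacle — the result is genuinely just a transport of Theorem \ref{pp-linstr} along $L$.
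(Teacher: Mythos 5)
Your proof is correct and takes essentially the same route as the paper: both identify $F = L \circ G$ with $G(x) = x + \gamma\,f(x)$ via the $\FB$-linearity of $L$ applied to the scalar $f(x) \in \FB$, and then invoke Theorem \ref{pp-linstr} together with the bijectivity of $L$. No differences worth noting.
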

\begin{proof}
Note that the mapping $F$ is the composition of $L$ and the mapping $x + \gamma  \,f(x)$. Indeed,
$$
L \big(x + \gamma \,f(x)\big) = L(x) + L\big(\gamma \,f(x)\big) = L(x) + f(x)L(\gamma).
$$
The rest follows from  Theorem  \ref{pp-linstr}.

\end{proof}
Recall that $\FB$-linear mappings of $\F$ are described by the polynomials $\sum_{i=0}^{n-1}\alpha _iX^{q^i}$ $ \in \F[X]$,
which are called  $q$-polynomials.
Hence given a  permutation $q$-polynomial, Theorem \ref{pp-lpp} combined with Lemma \ref{lemma-lai}
or Lemma \ref{deriv} yields variety of families of permutation polynomials.
As an example we consider
$L(X) = X^q + X$, which   is a permutation polynomial of $\F$
when $n$ is odd. The inverse mapping of $L$ is given by
$$
L^{-1}(X) = X^{q^{n-1}} - X^{q^{n-2}} + \ldots + X^{q^2} - X^q +X.
$$
Using these polynomials and Theorem  \ref{pp-lpp}, Lemma \ref{lemma-lai} we obtain:
\begin{theorem}
Let $H(X) \in \F[X],~\gamma, \beta  \in \F$ and $n$ be odd. 
\begin{itemize}
\item[(a)]
Then
$$
X^q + X + (\gamma^q + \gamma)\, Tr\big(H(X^q -\gamma^{q-1} X)+ \beta X\big)
$$
is a permutation polynomial of $\F$ if and only if $Tr(\gamma \beta) \ne -1$.
\item[(b)] Then 
$$
\sum_{i=1}^n(-1)^{i+1}X^{q^{n-i}}  + \Big(\sum_{i=1}^n(-1)^{i+1}\gamma^{q^{n-i}}\Big)\, Tr\big(H(X^q -\gamma^{q-1} X)+ \beta X\big)
$$
is a permutation polynomial of $\F$ if and only if $Tr(\gamma \beta) \ne -1$.
\end{itemize}
\end{theorem}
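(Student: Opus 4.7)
The plan is to recognize that the final theorem is a direct corollary of Theorem~\ref{pp-lpp} (the linearized extension of Theorem~\ref{pp-linstr}) combined with Lemma~\ref{lemma-lai}, specialized to the two linearized permutations $L(X)=X^q+X$ and its inverse $L^{-1}(X)=\sum_{i=1}^{n}(-1)^{i+1}X^{q^{n-i}}$, both of which are $\FB$-linear permutations of $\F$ when $n$ is odd.

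First, I would set
\[
f(x) = Tr\bigl(H(x^q - \gamma^{q-1}x) + \beta x\bigr),
\]
and invoke Lemma~\ref{lemma-lai} to conclude that (for $\gamma\ne 0$) $\gamma$ is a $b$-linear translator of $f$ with
\[
b = Tr(\beta\gamma).
\]
The case $\gamma=0$ is trivial: the polynomials in (a) and (b) degenerate to $L(X)$ respectively $L^{-1}(X)$, which are permutations, and the condition $Tr(\gamma\beta)\ne -1$ holds automatically.

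For part (a), I would apply Theorem~\ref{pp-lpp} with $L(X)=X^q+X$. Since $n$ is odd, $L$ is an $\FB$-linear permutation of $\F$ with $L(\gamma)=\gamma^q+\gamma$, so
\[
L(x) + L(\gamma)\,f(x) \;=\; X^q+X+(\gamma^q+\gamma)\,Tr\bigl(H(X^q-\gamma^{q-1}X)+\beta X\bigr).
\]
By Theorem~\ref{pp-lpp}(a), this is a permutation whenever $b\ne -1$, i.e.\ whenever $Tr(\gamma\beta)\ne -1$. Conversely, if $Tr(\gamma\beta)=-1$, Theorem~\ref{pp-lpp}(b) says the map is $q$-to-$1$, and hence not a permutation since $q\ge 2$. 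This yields the ``if and only if.'' Part (b) is identical in structure, now taking $L(X)=\sum_{i=1}^{n}(-1)^{i+1}X^{q^{n-i}}$, which is the inverse of $X^q+X$ (and therefore also a linearized permutation when $n$ is odd) and satisfies $L(\gamma)=\sum_{i=1}^{n}(-1)^{i+1}\gamma^{q^{n-i}}$.

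There is no real obstacle here since everything is assembled from results already proved: the only points to state cleanly are that $X^q+X$ and its inverse are $\FB$-linear permutations of $\F$ for odd $n$, that Lemma~\ref{lemma-lai} produces the translator value $Tr(\beta\gamma)$, and that the $q$-to-$1$ alternative in Theorem~\ref{pp-lpp}(b) rules out $Tr(\gamma\beta)=-1$ giving a permutation. Handling $\gamma=0$ separately (where Lemma~\ref{lemma-lai} formally does not apply) is the only mild bookkeeping item.
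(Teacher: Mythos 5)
Your proposal is correct and follows exactly the route the paper takes: it presents this theorem as an immediate consequence of Theorem~\ref{pp-lpp} combined with Lemma~\ref{lemma-lai}, applied to the $\FB$-linear permutations $L(X)=X^q+X$ and its inverse for odd $n$. Your extra care about the $\gamma=0$ degenerate case and about using the $q$-to-$1$ alternative to settle the ``only if'' direction is sound and only makes explicit what the paper leaves implicit.
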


The following result is of interest if $\gamma$ and $ \delta$ are linearly independent
over $\FB$,  otherwise it is covered by Theorem \ref{pp-linstr}. 
It describes permutations of $\F$ obtained form the identity mapping by changing its
$\gamma-$ and $\delta-$coordinate functions.
\begin{theorem}\label{pp-2linstr}
Let   $\gamma, \delta \in \F$. Suppose $\gamma$ is a 
$b_1$-linear translator of $f :\F \to \FB$ and a $b_2$-linear translator of $g :\F \to \FB$, and moreover
$\delta$ is a 
$d_1$-linear translator of $f$ and a $d_2$-linear translator of $g$.
Then
$$
F(x) = x + \gamma \,f(x) + \delta\,g(x)
$$
is a  permutation of $\F$, if $b_1 \ne -1$ and $ d_2 -  \frac{d_1b_2}{b_1+1} \ne -1$.
\end{theorem}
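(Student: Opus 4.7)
The plan is to realize $F$ as a composition $F = P\circ Q$ of two permutations of the kind treated in Theorem \ref{pp-linstr}, and then apply that theorem twice. Set $Q(x) = x + \gamma\, f(x)$. Since $\gamma$ is a $b_1$-linear translator of $f$ and $b_1 \ne -1$, Theorem \ref{pp-linstr} gives that $Q$ is a permutation, and Theorem \ref{inverse} supplies the explicit inverse $Q^{-1}(y) = y - \frac{\gamma}{b_1+1}\, f(y)$. Define $\tilde g(y) := g(Q^{-1}(y))$ and $P(y) := y + \delta\, \tilde g(y)$. A direct substitution then shows
\[
P(Q(x)) = Q(x) + \delta\, g\bigl(Q^{-1}(Q(x))\bigr) = x + \gamma f(x) + \delta g(x) = F(x),
\]
so it suffices to prove that $P$ is a permutation.

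To this end I would verify that $\delta$ is a $\bigl(d_2 - \tfrac{d_1 b_2}{b_1+1}\bigr)$-linear translator of $\tilde g$, after which Theorem \ref{pp-linstr} (with translator constant $d_2 - \tfrac{d_1 b_2}{b_1+1} \ne -1$) would immediately finish the proof. Starting from $\tilde g(y+u\delta) = g\bigl(Q^{-1}(y+u\delta)\bigr)$, the identity $f(y+u\delta) = f(y) + u d_1$ (since $\delta$ is a $d_1$-linear translator of $f$) lets me rewrite the argument of $g$ as $Q^{-1}(y) + u\bigl(\delta - \tfrac{d_1 \gamma}{b_1+1}\bigr)$. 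Proposition \ref{linsp} identifies $\delta - \tfrac{d_1 \gamma}{b_1+1}$ as a $\bigl(d_2 - \tfrac{d_1 b_2}{b_1+1}\bigr)$-linear translator of $g$, so evaluating $g$ at the shifted point yields exactly $\tilde g(y) + u\bigl(d_2 - \tfrac{d_1 b_2}{b_1+1}\bigr)$, as desired.

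The main subtlety is the degenerate case where $\delta - \tfrac{d_1 \gamma}{b_1+1} = 0$, in which Proposition \ref{linsp} does not literally name it a translator. In that case $\delta$ is a nonzero scalar multiple of $\gamma$, and the uniqueness of the translator constant for $g$ forces $d_2 = \tfrac{d_1 b_2}{b_1+1}$; the required identity for $\tilde g$ then holds trivially with translator constant $0 \ne -1$. Thus in every case $P$ is a permutation, and hence so is $F = P\circ Q$. No step requires substantial calculation beyond the translator bookkeeping, so the only genuine obstacle is keeping track of the constants carefully and treating the degenerate subcase cleanly.
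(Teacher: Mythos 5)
Your proof is correct and follows essentially the same route as the paper: factor $F = P\circ Q$ with $Q(x) = x + \gamma\, f(x)$, use Theorems \ref{pp-linstr} and \ref{inverse} for $Q$, and then check that $\delta$ is a $\bigl(d_2 - \tfrac{d_1 b_2}{b_1+1}\bigr)$-linear translator of $g\circ Q^{-1}$ so that Theorem \ref{pp-linstr} applies to $P$. The only cosmetic difference is that the paper first rewrites $g\circ Q^{-1}(x)$ as $g(x) - \tfrac{b_2}{b_1+1}f(x)$ (using that $\gamma$ is a $b_2$-translator of $g$) and verifies the translator identity directly from the definition, which avoids invoking Proposition \ref{linsp} and the degenerate subcase you treat separately.
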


\begin{proof}
Since $b_1 \ne -1$, the mapping $G(x) =  x + \gamma \,f(x)$ is a permutation by Theorem \ref{pp-linstr}.
Then using Theorem \ref{inverse}, the inverse mapping of $G$ is 
$$G^{-1}(x) = x - \frac{\gamma}{b_1+1}  \,f(x).$$
Consider
\begin{eqnarray*}
F\circ G^{-1}(x) &=& G \circ G^{-1}(x) + \delta \, g( x - \frac{\gamma}{b_1+1}  \,f(x))\\
& = & x + \delta\left( g(x) - \frac{b_2}{b_1+1}f(x) \right)\\
& = & x + \delta\, h(x).
\end{eqnarray*}
Note that $\delta$ is a $\left(d_2 -  \frac{d_1b_2}{b_1+1}\right)$-linear translator of $h:\F \to \FB$. Indeed, for any $u \in \FB$ it holds
\begin{eqnarray*}
h(x + \delta u) & = & g(x + \delta u) - \frac{b_2}{b_1+1}f(x +\delta u) \\ &=& g(x) + d_2u - \frac{b_2}{b_1+1}(f(x) + d_1u)\\
& = & h(x) + \left(d_2 -  \frac{d_1b_2}{b_1+1}\right)u.
\end{eqnarray*}
Theorem \ref{pp-linstr} completes the proof.
\end{proof}

As an application of Theorem \ref{pp-2linstr} we obtain:

\begin{theorem}
Let $\alpha \in \F\setminus \FB$ and 
$$
M(X) = X^{q^2} -(1 + (\alpha^q - \alpha)^{q-1})X^q + (\alpha^q - \alpha)^{q-1}X.
$$
Let $H_1, H_2 :\F \to \F$ and $\beta_1, \beta_2 \in \F$ be arbitrary. Then
$$
F(X) = X + Tr\big(H_1(M(X))+ \beta_1 X\big) + \alpha\, Tr\big(H_2(M(X))+ \beta_2 X\big)
$$
is a permutation polynomial of $\F$ if  
\begin{itemize}
\item
$Tr(\beta_1) \ne -1$ and $Tr(\alpha \beta_2) -  \frac{Tr(\alpha \beta_1)Tr(\beta_2)}{Tr(\beta_1)+1} \ne -1 $; or
\item
 $Tr(\alpha \beta_2) \ne -1$ and $Tr(\beta_1) -  \frac{Tr(\beta_2)Tr(\alpha \beta_1)}{Tr(\alpha \beta_2)+1} \ne -1$.
\end{itemize}
\end{theorem}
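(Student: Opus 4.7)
The plan is to reduce the statement to two applications of Theorem \ref{pp-2linstr}, with $(\gamma,\delta) = (1,\alpha)$ giving the first sufficient condition and $(\gamma,\delta) = (\alpha,1)$ giving the second. The key observation to unlock both is that $M(X)$ is $\FB$-linear with $\{1,\alpha\}$ in its kernel, so that every translation by an element of $\FB + \FB\alpha$ leaves $M$ invariant.

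To check this kernel claim, I would exhibit $M$ as the composition
$$
M(X) = L\circ N(X), \qquad N(X) = X^q - X, \qquad L(Y) = Y^q - (\alpha^q - \alpha)^{q-1}Y,
$$
which is verified by a one-line expansion using $(X^q - X)^q = X^{q^2} - X^q$. Then $\ker N = \FB$ and $\ker L = (\alpha^q - \alpha)\FB$, so $\ker M = N^{-1}(\ker L) = \FB + \FB\alpha$, which is genuinely two-dimensional over $\FB$ because $\alpha \notin \FB$. In particular $M(x+u) = M(x+u\alpha) = M(x)$ for every $x \in \F$ and $u \in \FB$.

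Once this is in place, the four translator constants fall out immediately. Writing $f(x) = Tr(H_1(M(x)) + \beta_1 x)$ and $g(x) = Tr(H_2(M(x)) + \beta_2 x)$, the kernel invariance cancels the $H_i$ contributions in $f(x+u\gamma) - f(x)$ and $g(x+u\gamma) - g(x)$ for $\gamma \in \{1,\alpha\}$, and the $\FB$-linearity of $Tr$ isolates $u\cdot Tr(\gamma\beta_i)$ from the remaining linear piece $Tr(\beta_i \cdot u\gamma)$. This identifies $1$ as a $Tr(\beta_1)$-linear translator of $f$ and a $Tr(\beta_2)$-linear translator of $g$, and $\alpha$ as a $Tr(\alpha\beta_1)$-linear translator of $f$ and a $Tr(\alpha\beta_2)$-linear translator of $g$.

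Substituting $b_1 = Tr(\beta_1)$, $b_2 = Tr(\beta_2)$, $d_1 = Tr(\alpha\beta_1)$, $d_2 = Tr(\alpha\beta_2)$ into Theorem \ref{pp-2linstr} with $(\gamma,\delta) = (1,\alpha)$ reproduces the first listed condition verbatim; interchanging the roles of the two translators (and correspondingly swapping $f$ and $g$ in the decomposition $F(x) = x + \gamma(\,\cdot\,) + \delta(\,\cdot\,)$) gives $(\gamma,\delta) = (\alpha,1)$, which yields the second. The only step that requires any genuine insight is spotting the factorization $M = L\circ N$; everything else is bookkeeping of parameters and reuse of previously established results.
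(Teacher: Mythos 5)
Your proof is correct and follows essentially the same route as the paper: both reduce the statement to Theorem \ref{pp-2linstr} with $(\gamma,\delta)=(1,\alpha)$ for the first condition and with the roles of $f$ and $g$ exchanged (i.e.\ $(\gamma,\delta)=(\alpha,1)$) for the second, after identifying the translator constants $Tr(\beta_1)$, $Tr(\beta_2)$, $Tr(\alpha\beta_1)$, $Tr(\alpha\beta_2)$. The only cosmetic difference is that you establish $1,\alpha\in\ker M$ via the factorization $M=L\circ N$ with $N(X)=X^q-X$, whereas the paper simply checks $M(1)=M(\alpha)=0$ by direct evaluation.
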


\begin{proof}
We are in the setting of Theorem \ref{pp-2linstr}: In the first case, $\gamma = 1,~\delta = \alpha$ and
$f(x) = Tr\big(H_1(M(x))+ \beta_1 x\big),~ g(x) = Tr\big(H_2(M(X))+ \beta_2 X\big)$.
Direct calculations show that $1$ is a $Tr(\beta_1)$-linear translator of $f$ and
is a  $Tr(\beta_2)$-linear translator of $g$, since $M(1)=0$. Similarly, $\delta$ is
a $Tr(\delta\beta_1)$-linear translator of $f$ and
is a  $Tr(\delta \beta_2)$-linear translator of $g$. In the second case the roles of $f$ and $g$ are exchanged.

\end{proof}

\section{Further constructions} \label{sec-gen}

The results of this section are inspired by Theorem 1 from \cite{marcos} and meanwhile generalize
it and most of the constructions of permutations from \cite{charpin-kyureg-gpp}.

\begin{theorem}\label{th:general}
Let $\gamma \in \F$  be a $b$-linear translator of  $f: \F \to \FB$ and $h:\FB \to \FB$.
Define $F:\F \to \F$  by
$$
F(x) = x + \gamma\, h(f(x)).
$$
 Then $F$  permutes $\F$ if and only if the mapping $g(u)= b h(u) + u$ permutes $\FB$.
\end{theorem}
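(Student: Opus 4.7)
The plan is to reduce the equation $F(x)=F(y)$ on $\F$ to a collision equation for $g$ on $\FB$, in the same spirit as the proof of Theorem~\ref{pp-linstr}. First I would observe that if $F(x)=F(y)$, then
$$
x - y \;=\; \gamma\bigl(h(f(y))-h(f(x))\bigr) \;\in\; \gamma\,\FB,
$$
so $x = y + \gamma c$ for a unique $c \in \FB$. The $b$-linear translator property then gives $f(x)=f(y)+cb$, and the equation $F(x)=F(y)$ collapses to the scalar identity
$$
c + h(u+cb) - h(u) = 0, \qquad u := f(y) \in \FB.
$$
Multiplying by $b$ and rearranging shows this is the same as $g(u+cb) = g(u)$, where $g(t) = bh(t)+t$. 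This reduction is the technical core.

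For the direction ``$g$ a permutation $\Rightarrow F$ a permutation,'' I would assume $F(x) = F(y)$ and use the reduction to obtain $g(u+cb)=g(u)$, hence $cb = 0$. When $b\neq 0$ this forces $c=0$ and thus $x=y$. When $b=0$, the scalar identity above reduces directly to $c=0$ and again $x=y$; in this case $g = \mathrm{id}$ is automatically a permutation, so the hypothesis is free.

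For the converse ``$F$ a permutation $\Rightarrow g$ a permutation,'' the case $b=0$ is vacuous, and for $b\neq 0$ I would exploit that the linear translator property forces $f$ to be surjective onto $\FB$: for any fixed $y_0$, the values $f(y_0+\gamma u) = f(y_0) + ub$ with $u \in \FB$ already exhaust $\FB$. Given a hypothetical collision $g(u_1)=g(u_2)$ with $u_1\neq u_2$, I would pick $y$ with $f(y)=u_2$, set $c=(u_1-u_2)/b \neq 0$ and $x = y+\gamma c \neq y$; the relation $bh(u_1)+u_1 = bh(u_2)+u_2$ gives $h(u_1)-h(u_2)=-c$, and a direct substitution yields $F(x)=F(y)$, contradicting injectivity of $F$.

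There is no serious conceptual obstacle; the only care needed is the bookkeeping that $c + h(u+cb) - h(u) = 0$ corresponds to $g(u+cb)=g(u)$ after multiplication by $b$, and the separate (trivial) handling of $b=0$ so that division by $b$ is never invoked.
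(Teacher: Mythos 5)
Your proposal is correct and follows essentially the same route as the paper: reduce $F(x)=F(y)$ to $x=y+\gamma c$ with $c\in\FB$, translate the collision into the scalar identity $c+h(u+cb)=h(u)$, and recognize it (after multiplying by $b$, handling $b=0$ separately) as a collision $g(u+cb)=g(u)$. Your treatment of the converse is in fact slightly more careful than the paper's, since you explicitly invoke the surjectivity of $f$ onto $\FB$ (forced by the $b$-linear translator property when $b\neq 0$) to turn a collision of $g$ into a collision of $F$, a point the paper leaves implicit.
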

\begin{proof}
The arguments from the proof of Theorem \ref{pp-linstr} show  that if for some $x,y \in \F$ it holds $F(x) = F(y)$, 
then $x = y + \gamma a$ with $a \in \FB$. 
Further, $F(y) = F(y+ \gamma a)$ implies
\begin{equation}\label{eq:x-u}
y + \gamma a + \gamma\, h(f(y) + b a) = y + \gamma\, h(f(y)),
\end{equation}
since
\begin{eqnarray*}
F(y + \gamma a)  =  y + \gamma a + \gamma\, h(f(y + \gamma a)) 
  =  y + \gamma a + \gamma\, h(f(y) + b a).
\end{eqnarray*}
Equation (\ref{eq:x-u}) is equivalent to
\begin{equation}\label{eq:x-u-r}
a +  h(f(y) + b a) =   h(f(y)).
\end{equation}
If $b=0$  then from (\ref{eq:x-u-r}) forces  $a=0$ and hence  the statement is true for that case.
If $b \ne 0$, then  (\ref{eq:x-u-r})  can be reduced to
$$
 h(f(y) + b a) + b^{-1} \big( f(y) + b a) =   h(f(y)) + b^{-1}f(y),
$$
and hence $g(f(y) + b a) = g(f(y))$. The latter equation is satisfied only for $a=0$ if and only if $g$ is a
permutation of $\FB$.

\end{proof}
Observe that Theorem \ref{pp-linstr} follows from Theorem \ref{th:general} if we take $g$ to be
the identity mapping. Next family of permutation polynomials is obtained combining Theorem  \ref{th:general} and Lemma \ref{lem:monom}.

\begin{theorem}
Let $n=4k$, $\beta \in \F$ and  $\gamma \in  \lambda^{(q^{4k}-1)/2(q^2-1)}\mathbb{F}_{q^2}^*$
with $\lambda$ being a primitive element of $\F$.
Further, suppose $t$ is a positive integer with $\gcd(t,q-1)=1$. Then the polynomial
$$
F(X) = X + \gamma \, Tr(\gamma \beta)^{q-2} \left( \left( Tr(X^{q+1} + \beta x)\right)^t - Tr(X^{q+1} + \beta x)\right)
$$
is a permutation polynomial of $\F$.
\end{theorem}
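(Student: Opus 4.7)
The plan is to combine Lemma \ref{lem:monom} with Theorem \ref{th:general}, so the work is almost entirely bookkeeping once the pieces are lined up.

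First I would set $f(x) = Tr(x^{q+1} + \beta x)$ and invoke Lemma \ref{lem:monom} to conclude that $\gamma$ is a $b$-linear translator of $f$, where $b := Tr(\gamma\beta) \in \FB$. Next I would isolate the degenerate case $b=0$: then $b^{q-2}=0$ (if $q>2$) makes the whole second summand vanish and $F(X)=X$ trivially permutes $\F$; the case $q=2$ is automatic too since over $\FBi$ one has $u^t - u = 0$ for all $u$, so again $F(X)=X$.

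From here on assume $b \neq 0$, so by Fermat's little theorem $b^{q-2} = b^{-1}$ in $\FB^*$. Then $F$ has the shape required by Theorem \ref{th:general}, namely
\[
F(x) = x + \gamma\, h(f(x)), \qquad h(u) := b^{-1}\bigl(u^{t} - u\bigr), \quad h:\FB\to\FB.
\]
By Theorem \ref{th:general}, $F$ permutes $\F$ iff the auxiliary map $g(u) = b\,h(u) + u$ permutes $\FB$. A direct substitution gives
\[
g(u) \;=\; b\cdot b^{-1}(u^t - u) + u \;=\; u^t,
\]
which is the whole reason the coefficient $Tr(\gamma\beta)^{q-2}$ is inserted in $F$: it was engineered precisely to cancel the $b$ coming from the translator constant.

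Finally, $u\mapsto u^t$ permutes $\FB$ exactly when $\gcd(t,q-1)=1$, which is our hypothesis (it permutes $\FB^*$ by the standard cyclic-group argument and fixes $0$). This closes the chain of equivalences. I do not foresee a real obstacle; the only subtle point is handling $b=0$ and, alongside it, the convention that $0^{q-2}=0$ used to write the inverse as a polynomial expression — once that is dispatched, the rest is a direct application of Theorem \ref{th:general}.
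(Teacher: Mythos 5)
Your proposal is correct and follows essentially the same route as the paper: apply Lemma \ref{lem:monom} to get that $\gamma$ is a $Tr(\gamma\beta)$-linear translator of $f(x)=Tr(x^{q+1}+\beta x)$, then apply Theorem \ref{th:general} with $h(u)=Tr(\gamma\beta)^{q-2}(u^t-u)$, observing that $g(u)=Tr(\gamma\beta)h(u)+u$ is either the identity (when $Tr(\gamma\beta)=0$) or $u^t$, a permutation of $\FB$ since $\gcd(t,q-1)=1$. The only cosmetic difference is that in the case $Tr(\gamma\beta)=0$ you note directly that $F$ reduces to the identity, while the paper still routes this case through Theorem \ref{th:general}; both are fine.
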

\begin{proof}
We apply Theorem \ref{th:general} with  $h(u) = Tr(\gamma \beta )^{q-2}(u^t - u)$ and $f(x) =  Tr(x^{q+1} + \beta x)$.
Lemma \ref{lem:monom} shows that $\gamma$ is a $ Tr(\gamma \beta )$-linear translator of $f$.
To complete the proof note that the mapping
$$
Tr(\gamma \beta )h(u) +u = \left\{ \begin{array}{ll}
u & \mbox{ if }~Tr(\gamma \beta )=0\\
u^t & \mbox{ otherwise, }
\end{array} \right .
$$
and thus $h$ is a permutation of $\FB$.
\end{proof}

\begin{theorem}\label{th:lin-general}
Let $L:\F \to \F$ be an $\FB$-linear permutation of $\F$.  
Let $\gamma \in \F$  be a $b$-linear translator of  $f: \F \to \FB$ and $h:\FB \to \FB$.
Then the mapping 
$$
G(x) = L(x) + L(\gamma) \, h(f(x))
$$ 
permutes $\F$ if and only if $g(u)= b h(u) + u$  permutes $\FB$.
\end{theorem}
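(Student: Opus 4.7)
The plan is to reduce this to Theorem \ref{th:general} by the same trick that was used in the proof of Theorem \ref{pp-lpp}, namely by factoring $G$ through an $\FB$-linear permutation.

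First I would observe that since $L$ is $\FB$-linear and since $h(f(x)) \in \FB$ for every $x \in \F$, scalar multiplication by $h(f(x))$ commutes with $L$. Explicitly,
$$
G(x) = L(x) + L(\gamma)\, h(f(x)) = L(x) + L\bigl(\gamma\, h(f(x))\bigr) = L\bigl(x + \gamma\, h(f(x))\bigr).
$$
Setting $F(x) = x + \gamma\, h(f(x))$, this identifies $G = L \circ F$ as a composition of two mappings of $\F$.

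Next, since $L$ is a permutation of $\F$ by hypothesis, the composition $L \circ F$ is a permutation of $\F$ if and only if $F$ is a permutation of $\F$. Theorem \ref{th:general} applies directly to $F$ and asserts precisely that $F$ permutes $\F$ if and only if $g(u) = b h(u) + u$ permutes $\FB$. Combining these two equivalences yields the statement.

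There is no real obstacle here: the only thing to be careful about is the step where one pulls the scalar $h(f(x))$ inside $L$, which requires both that $L$ is $\FB$-linear and that $h(f(x))$ belongs to $\FB$ (not merely to $\F$). Both hypotheses are built into the statement, so the reduction is immediate and the remainder of the argument is a citation of Theorem \ref{th:general}.
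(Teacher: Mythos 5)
Your proposal is correct and matches the paper's own argument: the paper likewise writes $G = L \circ F$ with $F(x) = x + \gamma\, h(f(x))$, using $\FB$-linearity of $L$ and the fact that $h(f(x)) \in \FB$, and then invokes Theorem \ref{th:general}. Nothing to add.
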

\begin{proof}
Note that $G$ is a composition of $L$ and $F(x) =  x + \gamma\, h(f(x))$. Indeed,
$$
L\big( x + \gamma\, h(f(x)) \big)= L(x) + h(f(x))L(\gamma).
$$
The rest of the proof follows from Theorem \ref{th:general}.

\end{proof}
Next we show that  Theorem 1 of \cite{marcos} is a particular case of Theorem \ref{th:lin-general}. 

\begin{theorem}[\cite{marcos}]
Let $L(X) \in \FB[X]$ be a permutation polynomial of $\F$, $h(X) \in \FB[X]$ and $\gamma \in \F$ with $Tr(\gamma) = b$.
Then the polynomial
$$
L(X) + \gamma \,h\big(Tr(X))
$$
is a permutation polynomial of $\F$ if and only if the polynomial
$$ 
L(1)X + bh(X)
$$
is a permutation polynomial of $\FB$.
\end{theorem}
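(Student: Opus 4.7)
The plan is to derive this as a direct application of Theorem \ref{th:lin-general} after a change of variables. Since $L(X) \in \FB[X]$ is a linearized polynomial that permutes $\F$, it is an $\FB$-linear bijection. First I would set $\eta := L^{-1}(\gamma) \in \F$ so that $L(\eta) = \gamma$. Because the trace map is $\FB$-linear, for every $u \in \FB$ we have $Tr(x + u\eta) - Tr(x) = u\, Tr(\eta)$, which means $\eta$ is a $Tr(\eta)$-linear translator of $f(x) = Tr(x)$.

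Next, I would invoke Theorem \ref{th:lin-general} with this $L$, with $\eta$ in the role of $\gamma$, and with $f = Tr$. The resulting permutation candidate is $L(X) + L(\eta)\, h(Tr(X)) = L(X) + \gamma\, h(Tr(X))$, which is precisely the polynomial in the statement of Marcos's theorem. Theorem \ref{th:lin-general} yields: this is a permutation polynomial of $\F$ if and only if $Tr(\eta)\, h(u) + u$ permutes $\FB$.

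The remaining step is to rewrite this permutation condition in terms of $b = Tr(\gamma)$ and $L(1)$. Writing $L(X) = \sum_{i} a_i X^{q^i}$ with $a_i \in \FB$, the crucial identity is
\[
Tr(\gamma) \;=\; Tr(L(\eta)) \;=\; \sum_{i} a_i\, Tr(\eta^{q^i}) \;=\; \Big(\sum_i a_i\Big) Tr(\eta) \;=\; L(1)\, Tr(\eta),
\]
which uses that each $a_i$ lies in $\FB$ and that the trace is Frobenius-invariant. Note $L(1) \neq 0$ because an $\FB$-linear $L$ is injective and satisfies $L(0) = 0$. Hence $Tr(\eta) = b/L(1)$, and multiplying the polynomial $Tr(\eta)\, h(u) + u$ by the nonzero scalar $L(1)$ (which does not affect whether it permutes $\FB$) turns it into $L(1)\, u + b\, h(u)$, giving the condition stated by Marcos.

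The only genuinely nontrivial step is the identity $Tr(L(\eta)) = L(1)\, Tr(\eta)$, which fails for $\FB$-linear mappings whose coefficients do not lie in $\FB$. This is precisely the reason Marcos's theorem comes with the rescaled factor $L(1)$ in front of $X$ rather than in the cleaner unrescaled form $X + Tr(\eta) h(X)$ that Theorem \ref{th:lin-general} supplies directly.
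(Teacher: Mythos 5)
Your proposal is correct and follows essentially the same route as the paper: invert $L$ to get $\eta$ with $L(\eta)=\gamma$, observe that $\eta$ is a $Tr(\eta)$-linear translator of $Tr$, apply Theorem \ref{th:lin-general}, and use the identity $Tr(\gamma)=L(1)\,Tr(\eta)$ to rescale the permutation condition on $\FB$. You actually spell out the final rescaling step (multiplying $Tr(\eta)h(u)+u$ by $L(1)\ne 0$) more explicitly than the paper, which simply says the rest follows from Theorem \ref{th:lin-general}.
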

\begin{proof}
Since $L$ is a permutation of $\F$ there is a unique $\delta \in \F$ such that $L(\delta) = \gamma$.
Note that $\delta$ is a $Tr(\delta)$-linear translator of the mapping $Tr(x)$. Moreover, if $L(X) = \sum_{i=0}^{n-1}a_iX^{q^i}$,
then 
$$
Tr(\gamma) = Tr(L(\delta)) = Tr(\sum_{i=0}^{n-1}a_i\delta^{q^i}) = \sum_{i=0}^{n-1}a_i Tr(\delta) = L(1) Tr(\delta),
$$
and hence $Tr(\delta) = (L(1))^{-1}b$. The rest follows from Theorem \ref{th:lin-general}.
\end{proof}

Finally, we describe permutation polynomials obtained from $\FB$-linear mappings of $\F$ with one-dimensional kernel
via changing a coordinate function. 

\begin{theorem}\label{th:lin-qto1-general}
Let $L:\F \to \F$ be an $\FB$-linear mapping of $\F$ with kernel $\alpha\FB,~ \alpha \ne 0$.  
Suppose $\alpha$  is a $b$-linear translator of  $f: \F \to \FB$ and $h:\FB \to \FB$ is a permutation of $\FB$.
Then the mapping 
$$
G(x) = L(x) + \gamma \, h(f(x))
$$ 
permutes $\F$ if and only if $b\ne 0$ and $\gamma$ does not belong to the image set of $L$.
\end{theorem}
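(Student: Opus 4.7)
The plan is to analyze $G$ coset-by-coset with respect to $\ker(L)=\alpha\FB$. Since $L$ is $\FB$-linear with one-dimensional kernel, $L$ is $q$-to-$1$ and its image $V=L(\F)$ is an $\FB$-subspace of $\F$ of codimension one. For any $y\in\F$ and $u\in\FB$, using $L(\alpha)=0$ together with the fact that $\alpha$ is a $b$-linear translator of $f$, one obtains the key identity
$$
G(y+u\alpha)=L(y)+\gamma\,h\bigl(f(y)+ub\bigr),
$$
which describes the action of $G$ on the entire coset $y+\alpha\FB$ in terms of the single parameter $u$.

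For the implication ``$G$ is a permutation $\Rightarrow$ $b\ne 0$ and $\gamma\notin V$'', I would argue by contrapositive. If $b=0$, the displayed right-hand side is independent of $u$, so $G$ collapses $y+\alpha\FB$ to a single point; since $|\alpha\FB|=q>1$, $G$ fails to be injective (and $\gamma\ne 0$ may be assumed, else $G=L$ is already not a permutation). If instead $b\ne 0$ but $\gamma\in V$, pick $z_0\in\F$ with $L(z_0)=\gamma$; since $\gamma\ne 0$ this gives $z_0\notin\alpha\FB$, so the cosets $y+\alpha\FB$ and $(y+z_0)+\alpha\FB$ are distinct, and the next paragraph will show that both are sent onto the same $q$-element set $L(y)+\gamma\FB$, producing a collision.

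For the converse, assume $b\ne 0$ and $\gamma\notin V$. Then $\gamma\ne 0$, and since $h$ permutes $\FB$ and the map $u\mapsto f(y)+ub$ permutes $\FB$, the key identity shows that $G$ restricts to a bijection from $y+\alpha\FB$ onto the affine set $L(y)+\gamma\FB$, which has exactly $q$ elements. Injectivity of $G$ on $\F$ therefore reduces to checking that distinct cosets of $\alpha\FB$ are mapped to disjoint target sets. Two such targets $L(y_1)+\gamma\FB$ and $L(y_2)+\gamma\FB$ coincide exactly when $L(y_1-y_2)\in\gamma\FB$, whereas distinct cosets correspond to $y_1-y_2\notin\ker(L)$, i.e.\ $L(y_1-y_2)\ne 0$. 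Hence a cross-coset collision occurs iff $V\cap\gamma\FB\ne\{0\}$; because $\gamma\FB$ is a one-dimensional subspace and $V$ has codimension one in $\F$, this is equivalent to $\gamma\in V$, which the hypothesis excludes.

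I do not anticipate a serious obstacle, as the coset formula essentially reduces everything to linear algebra over $\FB$. The only bookkeeping to do carefully is keeping the two non-degeneracy conditions separated: the failure mode $b=0$ must be handled before invoking the ``coset surjects onto $L(y)+\gamma\FB$'' fact, which itself requires $b\ne 0$, and the condition $\gamma\ne 0$ needs to be extracted from $\gamma\notin V$ rather than assumed separately.
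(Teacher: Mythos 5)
Your proof is correct, and it is organized differently from the paper's, even though both rest on the same key identity $G(y+u\alpha)=L(y)+\gamma\,h(f(y)+ub)$. The paper argues collision-wise: assuming $G(x)=G(y)$ it derives $\gamma\bigl(h(f(x))-h(f(y))\bigr)=L(y-x)$, uses $\gamma\notin L(\F)$ to force $h(f(x))=h(f(y))$ and $y-x\in\alpha\FB$, and then reduces to $h(f(x))=h(f(x)+ab)$, which has only $a=0$ as a solution exactly when $b\ne 0$; the necessity of $\gamma\notin L(\F)$ is dispatched in one line by noting that otherwise the image of $G$ is contained in the image of $L$, which has only $q^{n-1}$ elements (an argument that works uniformly in $b$ and $h$, whereas your collision construction for the case $b\ne 0$, $\gamma\in L(\F)$ needs the coset-image computation). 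Your version is a structural, coset-geometric rephrasing: $G$ maps each coset of $\ker(L)=\alpha\FB$ bijectively onto a coset of $\gamma\FB$, and injectivity across cosets is the linear-algebra statement $L(\F)\cap\gamma\FB=\{0\}$, i.e.\ $\gamma\notin L(\F)$. This buys a slightly sharper picture (it exhibits $G$ as inducing a bijection between the two coset spaces, and makes the failure modes explicit as "collapse within a coset" for $b=0$ and "two cosets with the same target" for $\gamma\in L(\F)$), at the cost of a little more bookkeeping. One small presentational point: in the branch $b\ne 0$, $\gamma\in L(\F)$ you pick $z_0$ with $L(z_0)=\gamma$ and use $\gamma\ne 0$, so the subcase $\gamma=0$ should be disposed of there as well (trivially, since then $G=L$ has nontrivial kernel); you only note this explicitly inside the $b=0$ branch, but the remark transfers verbatim, so this is not a real gap.
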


\begin{proof}
In the case $\gamma$ belongs to the image set of $L$, the image set of $G$ is contained in that of $L$. Hence
if $G$ is a permutation, then necessarily $\gamma$ is not in the image of $L$.
Now suppose, $\gamma$ does not belong to the image set of $L$. Let $x,y \in \F$ be such that
$G(x) = G(y)$. Then
$$
 L(x) + \gamma \, h(f(x)) = L(y) + \gamma \, h(f(y)),
$$
and consequently
\begin{equation}\label{kernel}
\gamma \Big( h(f(x)) -  h(f(y))\Big) = L(y-x).
\end{equation}
Since  $\gamma$ does not belong to the image set of $L$, equation (\ref{kernel}) is possible
if and only if $ h(f(x)) =  h(f(y))$ and $y-x$ is in the kernel of $L$. So, let $y = x + a\alpha$ with $a \in \FB$.
Then   (\ref{kernel}) is reduced to
\begin{equation}\label{last}
h(f(x)) -  h(f(x + a\alpha)) = h(f(x)) - h(f(x) + ab) =0.
\end{equation}
The only solution of (\ref{last}) is  $a=0$ if and only if $b\ne 0$ and $h$ permutes $\FB$.

\end{proof}

A particular case of Theorem \ref{th:lin-qto1-general}, where the mapping $h$ is the identity mapping, is
proved in \cite{charpin-kyureg-gpp}. As an application of  Theorem \ref{th:lin-qto1-general} we describe a family of
permutation polynomials.

\begin{theorem}
Let $t$ be a positive integer with $\gcd(t,q-1)=1$, $H(X) \in \F[X]$ and $\gamma, \beta \in \F$. Then 
$$
G(X) = X^q - X + \gamma \Big(Tr(H(X^q-X) + \beta X)\Big)^t \in \F[X]
$$
is a permutation polynomial of $\F$ if and only if $Tr(\gamma) \ne 0$ and $Tr(\beta) \ne 0$.
\end{theorem}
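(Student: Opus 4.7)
The plan is to recognize this as a direct application of Theorem \ref{th:lin-qto1-general}, with the specific linear map $L(X) = X^q - X$ and the specific power map $h(u) = u^t$. I would first set $L(x) = x^q - x$, which is $\FB$-linear. Its kernel is exactly $\FB$, i.e., $\alpha \FB$ with $\alpha = 1$, since $x^q = x$ iff $x \in \FB$. A standard additive Hilbert 90 / Artin--Schreier argument identifies the image of $L$ with the hyperplane $\{x \in \F \mid Tr(x) = 0\}$: the inclusion $\subseteq$ is immediate from $Tr(x^q) = Tr(x)$, and equality follows by dimension count. Thus the condition ``$\gamma$ does not belong to the image of $L$'' translates exactly into $Tr(\gamma) \ne 0$.

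Next I would identify the inner trace function $f(x) = Tr(H(x^q - x) + \beta x)$ and apply Lemma \ref{lemma-lai} with the role of $\gamma$ there played by $1$. That lemma tells us that $1$ is a $Tr(\beta \cdot 1) = Tr(\beta)$-linear translator of $f$. So in the language of Theorem \ref{th:lin-qto1-general} we have $\alpha = 1$, $b = Tr(\beta)$, and the required translator property holds. Finally, choose $h: \FB \to \FB$ by $h(u) = u^t$; since $\gcd(t, q-1) = 1$, this permutes $\FB^*$ and fixes $0$, so it permutes $\FB$.

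With these identifications, Theorem \ref{th:lin-qto1-general} applies and gives exactly the equivalence: $G$ is a permutation of $\F$ if and only if $b \ne 0$ and $\gamma$ is not in the image of $L$, i.e., $Tr(\beta) \ne 0$ and $Tr(\gamma) \ne 0$. There is no real obstacle here; the whole proof is a matter of matching each hypothesis of Theorem \ref{th:lin-qto1-general} to a concrete ingredient of the statement. The one place where a brief verification is genuinely needed is the computation of the image of $x^q - x$, but that is a classical fact and I would only cite it rather than redo it. The rest consists of plugging in Lemma \ref{lemma-lai} and the coprimality condition on $t$.
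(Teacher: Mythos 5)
Your proposal is correct and follows essentially the same route as the paper: both apply Theorem \ref{th:lin-qto1-general} with $L(x)=x^q-x$, $\alpha=1$, $f(x)=Tr(H(x^q-x)+\beta x)$, $h(u)=u^t$, using Hilbert 90 for the image of $L$ and the coprimality of $t$ with $q-1$. The only cosmetic difference is that you cite Lemma \ref{lemma-lai} (with $\gamma=1$) to get that $1$ is a $Tr(\beta)$-linear translator of $f$, whereas the paper redoes that one-line computation directly.
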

\begin{proof}
We apply  Theorem \ref{th:lin-qto1-general} with
$L(x) = x^q-x, f(x) = Tr(H(x^q-x) + \beta x)$ and $h(u)=u^t$.
 The mapping $L(x)=x^q-x$ is  $\FB$-linear with kernel $\FB$ and
so $\alpha$ may be chosen to be $1$. The image set of $L$ consist of all elements $y$ from $\F$ with
$Tr(y)=0$ by Hilbert's Theorem 90. Further $\alpha = 1$ is a $Tr(\beta)$-linear translator of the mapping
$f(x)$. Indeed, for any $u \in \FB$ it holds
\begin{eqnarray*}
f(x+u)& = & Tr(H((x+u)^q-(x+u)) + \beta (x+u))\\ & = & Tr(H(x^q-x) + \beta x) + u \,Tr(\beta)\\ & = & f(x) + u \,Tr(\beta).
\end{eqnarray*}
It remains to note that $h(u) = u^t$ is a permutation of $\FB$ by the choice of $t$.
\end{proof}

\begin{center}
{\bf Acknowledgments}
\end{center}
The author   thanks Pascale Charpin and Mike Zieve for their comments on the preliminary version
of this paper.

\end{document}